\newtheorem{prop}{Proposition}[section]
\newtheorem{thm}{Theorem}[section]
\newtheorem{lem}{Lemma}[section]
\newtheorem{cor}[thm]{Corollary}
\newtheorem{rek}[lem]{Remark}
\begin{document}

\title{Existence and multiplicity of solutions for nonlocal systems with Kirch\/hof\/f type}

\author[a]{Zhitao Zhang\thanks{Supported by NSFC 11325107,11271353, 11331010.}}
\author[b]{Yimin Sun\thanks{Corresponding author.}}

\affil[a]{\small Academy of Mathematics and Systems Science,Chinese Academy of Sciences,\authorcr Beijing 100190, China\authorcr zzt@math.ac.cn}

\affil[b]{\small Department of Mathematics, Northwest University, Xi'an 710127, China\authorcr ymsun@nwu.edu.cn}        

\date{}

\maketitle

\renewcommand{\abstractname}{Abstract}

\begin{abstract}
Firstly, we use Nehari manifold and Mountain Pass Lemma to prove an
existence result of positive solutions for a class of nonlocal
elliptic  system with Kirchhoff type. Then a multiplicity result is
established by  cohomological index of Fadell and Rabinowitz. We
also consider the critical case and prove existence of positive
least energy solution when the parameter $\beta$ is sufficiently large.

\end{abstract}

\emph{Keywords:} Critical point; Nehari manifold; cohomological
index.

\emph{AMS Subjection Classification(2000):} 35J55; 35J65; 49J40.

\section{Introduction}
 In this paper, we are concerned with existence and multiplicity results for the following
 nonlocal boundary value problem of Kirch\/hof\/f type
\begin{equation}\label{1}
\begin{cases}
 -(a_1+b_1\int_{\Omega} |\nabla u_1|^2\,\mathrm{d}x)\Delta u_1\ =
  \lambda_1 |u_1|^{p-1}u_1+ \beta |u_1|^{\tfrac{q-3}{2}} |u_2|^{\tfrac{q+1}{2}}u_1 \qquad in \ \Omega,\\
-(a_2+b_2\int_{\Omega} |\nabla u_2|^2\,\mathrm{d}x)\Delta u_2\ =
 \lambda_2 |u_2|^{p-1}u_2+ \beta |u_1|^{\tfrac{q+1}{2}} |u_2|^{\tfrac{q-3}{2}}u_2 \qquad in \ \Omega,\\
u_1=u_2=0 \quad on\ \partial\Omega,
\end{cases}
\end{equation}
where $\Omega\subset{\mathbb{R}^N},$ for $N=1,2,3,$ is a bounded
smooth domain, $\beta\in\mathbb{R}$ and $a_i, b_i,
{\lambda}_i$ are positive constants respectively for  $i=1,2$.
Moreover, $p$ and $q$ are two positive numbers that satisfy some
conditions to be stated later on.

Problem (\ref{1}) is called nonlocal because of the presence of the terms $b_i\int_{\Omega}|\nabla u|^2\,\mathrm{d}x$, $i=1,2.$
And the operator $b(\int_{\Omega}|\nabla u|^2\,\mathrm{d}x)\Delta u$ appears in the Kirchhoff equation
\begin{equation}\label{2}
 \begin{cases}
 -(a+b\int_{\Omega} |\nabla u|^2\,\mathrm{d}x)\Delta u\ = f(x,u) \qquad in \ \Omega\\
u=0 \qquad on\ \quad\partial\Omega,
\end{cases}
\end{equation} related to  the stationary analogue of the equation
\begin{equation*}
    u_{t\/t}-(a+ b \int_{\Omega}|\nabla u|^2\,\mathrm{d}x)\Delta u= f(x,t),
\end{equation*}
which was proposed by Kirch\/hof\/f\cite{Kirchhoff1883} as an extension of the classical
D'Alembert's wave equation for free vibrations of elastic strings.
Equation (\ref{2}) has attracted a considerable attention only after Lions\cite{Lions1978}
 presented an abstract framework to this problem.
 Some interesting and further results can be found in
\cite{Alves-Correa-Fig2010,Biagio2010,Ma-Munoz2003,Mao-Zhang,Perera-Zhang06}
and the references therein, among which Perera and Zhang obtained nontrivial solutions of  a
class of nonlocal quasi-linear elliptic  boundary value problems using the Yang index, invariant sets of descent
flow and critical groups in \cite{Perera-Zhang06,Zhang-Perera06}.

Such nonlocal problems also model
several physical and biological systems where $u$ describes a
process which depends on the average of itself, for example the
population density (see, e.g.
\cite{Alves-Correa-Ma05,Andrade-Ma97,Alves-Correa2001,
Vasconcellos92,Chipot-Lovat97,Chipot-Rodrigues92}).

Without such nonlocal terms, (\ref{1}) is related to  the following nonlinear  Schr{\"o}dinger system
\begin{equation}\label{BECS}
 \begin{cases}
 -\Delta u_1 + \lambda_1 u_1= \mu_1 u_1^3+\beta u_1 u_2^2 \qquad in \ \Omega,\\
 -\Delta u_2 + \lambda_2 u_2= \mu_2 u_2^3 + \beta u_2 u_1^2 \qquad in \ \Omega,
 \end{cases}
\end{equation}
stemming from many physical problems, especially in nonlinear optics and in the Hartree-Fock theory for Bose-Einstein condensates;
see, for example, \cite{AC07,BWW07,DanWei09,WW08,FL08,LW05,LW06,Sirakov07}.


In the special case $f(x,u)=\lambda(u^+)^p$ with $3<p<2^{*}-1$,
problem (\ref{2}) admits a positive solution $w(x)=w(x;a,b,\lambda)$
through the following minimization
\begin{equation*}
    \inf_{u \in \tilde{\mathcal{N}}}I(u)
\end{equation*}
where $$ I(u)= {\frac{a}{2}}\int_{\Omega} |\nabla u|^2\,\mathrm{d}x
+ \frac{b}{4}\left(\int_{\Omega} |\nabla u|^2\,\mathrm{d}x\right)^2-
\frac{\lambda}{p+1}\int_{\Omega}(u^+)^{p+1}\,\mathrm{d}x$$ and
$$\tilde{\mathcal{N}}=\left\{u\in H_0^1(\Omega)\setminus\{0\}: a\int_{\Omega}
|\nabla u|^2\,\mathrm{d}x + b\left(\int_{\Omega} |\nabla
u|^2\,\mathrm{d}x\right)^2= \lambda\int_{\Omega}(u^+)^{p+1}\,\mathrm{d}x
\right\}.$$

If we set $U_1(x)=w(x;a_1,b_1,\lambda_1)$ and
$U_2(x)=w(x;a_2,b_2,\lambda_2)$, then problem (\ref{1}) admits two
semi-positive  solutions $\mathbf{u}_1=(U_1,0)$  and
$\mathbf{u}_2=(0,U_2)$, for all $\beta \in \mathbb{R}.$

We are interested in solutions $\mathbf{u}=(u_1,u_2)$ of problem
(\ref{1}) with all components $u_j>0$, $j=1,2$. These are called
positive solutions as opposed to semi-positive solutions.

 Firstly, we study problem $(\ref{1})$ with the  following subcritical growth:
\begin{equation}\label{subcritical case}
     p=q \in (3,2^*-1)\quad with \quad 2^*=
  \left \{ \begin{array}{cc}
    6  & for\, N=3,\\
    \infty  & for\, N=1,2.
\end{array}\right.
\end{equation}

For $\beta>-\sqrt{\lambda_1\lambda_2}$, the Nehari manifold
corresponding
 to the energy functional of problem (\ref{1}) can be well defined (see the proof of \emph{Lemma \ref{2.1}(i)}).
 We will show that the Morse index of semi-positive solution
 $\mathbf{u}_i$ $(i=1,2)$ is exactly one under above assumptions (see Lemma \ref{strict local min}). This fact, jointly with the Mountain Pass Lemma, yields our first result of this paper.
\begin{thm}\label{thm1}
Let $p=q\in(3,2^*-1)$ and $\beta>-\sqrt{\lambda_1\lambda_2}$. Assume
that $N=1$ or $\Omega$ is radially symmetric. Problem (\ref{1})
possesses a positive solution $\mathbf{u}^*$ with Morse index at
least two.
\end{thm}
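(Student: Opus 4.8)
The plan is to realize $\mathbf{u}^*$ as a mountain pass critical point of the energy functional associated to (\ref{1}), restricted to the Nehari manifold, using the two semi-positive solutions $\mathbf{u}_1=(U_1,0)$ and $\mathbf{u}_2=(0,U_2)$ as the ``mountains'' that the minimax path must cross. Let $\mathcal{N}$ denote the Nehari manifold for the full system, which is well defined for $\beta>-\sqrt{\lambda_1\lambda_2}$ by \emph{Lemma \ref{2.1}(i)}, and let $\mathcal{N}^+$ be the portion of $\mathcal{N}$ consisting of pairs with both components positive (equivalently, work on the cone of pairs $(u_1,u_2)$ with $u_i\ge 0$ and pass to $u_i^+$ in the functional, noting that by the maximum principle and the Kirchhoff structure any nontrivial nonnegative component is strictly positive in $\Omega$). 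The first task is to check that the energy functional $I$ restricted to $\mathcal{N}$ satisfies the Palais--Smale condition: this follows the standard route for subcritical Kirchhoff problems --- a $(PS)_c$ sequence is bounded because $p=q\in(3,2^*-1)$ makes the $b_i$-quartic terms help rather than hurt, and then one extracts a strongly convergent subsequence using the compact Sobolev embedding $H_0^1(\Omega)\hookrightarrow L^{p+1}(\Omega)$, taking care that the nonlocal coefficients $a_i+b_i\int|\nabla u_i|^2$ converge along the subsequence so that the limiting equation is the right one.

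Next I would set up the minimax. By \emph{Lemma \ref{strict local min}}, each $\mathbf{u}_i$ is a strict local minimum of $I$ on $\mathcal{N}$ (this is precisely what ``Morse index exactly one'' buys us, since the single unstable direction for $\mathbf{u}_i$ in $H_0^1\times H_0^1$ is transverse to $\mathcal{N}$, or rather the constraint removes it, leaving $\mathbf{u}_i$ a local min on the manifold). I would then define
\begin{equation*}
 c=\inf_{\gamma\in\Gamma}\max_{t\in[0,1]}I(\gamma(t)),\qquad
 \Gamma=\{\gamma\in C([0,1],\mathcal{N}^+):\gamma(0)=\mathbf{u}_1,\ \gamma(1)=\mathbf{u}_2\},
\end{equation*}
after first verifying that $\Gamma\neq\emptyset$ --- one can join $(U_1,0)$ to $(0,U_2)$ inside the positive cone by a path such as $(\,(1-t)U_1,\,tU_2\,)$ and then project radially onto $\mathcal{N}$ using the fibering map from \emph{Lemma \ref{2.1}}, which is continuous in $t$. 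The mountain pass geometry is supplied by \emph{Lemma \ref{strict local min}}: since both endpoints are strict local minima, there is a small sphere around $\mathbf{u}_1$ in $\mathcal{N}^+$ on which $I$ exceeds $\max\{I(\mathbf{u}_1),I(\mathbf{u}_2)\}+\varepsilon$ (or one argues directly that $c>\max\{I(\mathbf{u}_1),I(\mathbf{u}_2)\}$ because $\mathbf{u}_2$ lies outside that sphere and every path must exit it). Combining this with the $(PS)$ condition, the Mountain Pass Lemma produces a critical point $\mathbf{u}^*\in\mathcal{N}^+$ of $I$ with $I(\mathbf{u}^*)=c$; by the Lagrange multiplier argument standard for Nehari manifolds (the constraint is ``natural'') $\mathbf{u}^*$ is a genuine critical point of $I$ on $H_0^1\times H_0^1$, hence a weak solution of (\ref{1}), and elliptic regularity plus the strong maximum principle give $u_i^*>0$.

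The remaining and most delicate point is the Morse index estimate: I must show the index of $\mathbf{u}^*$ is at least two. The mountain pass theorem only yields, in general, a critical point whose Morse index is at most one (for nondegenerate critical points) or which is of ``mountain pass type''. The extra unit of index comes from the nonlocal Kirchhoff structure combined with working on the Nehari manifold: the Nehari constraint already accounts for one positive direction of the unconstrained Hessian, so a critical point that is of mountain pass type \emph{on} $\mathcal{N}$ has at least one negative direction \emph{on} $\mathcal{N}$, which lifts to at least two negative directions of the full Hessian $I''(\mathbf{u}^*)$ on $H_0^1\times H_0^1$. Making this rigorous requires the now-classical result (in the spirit of Ambrosetti--Rabinowitz, or Hofer, or the treatments used by Perera--Zhang in the cited papers) that a mountain pass point between two strict local minima has Morse index $\le 1$ only if it is a saddle with a one-dimensional unstable manifold tangent to $\mathcal{N}$, together with a careful computation of the quadratic form $I''(\mathbf{u}^*)[\varphi,\varphi]$ showing it is negative on a two-dimensional subspace --- for instance the span of the tangent mountain pass direction and the radial (fibering) direction $\mathbf{u}^*$ itself, using that $\langle I'(\mathbf{u}^*),\mathbf{u}^*\rangle=0$ on $\mathcal{N}$ while the second derivative in the fibering parameter is strictly negative precisely because $p=q>3$. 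I expect this index bookkeeping --- correctly relating the constrained and unconstrained Hessians and invoking the right minimax characterization of Morse index --- to be the main obstacle; the $(PS)$ condition and the path construction are routine by comparison.
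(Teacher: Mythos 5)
Your overall strategy---Nehari manifold, Palais--Smale via the compact embedding, the two semi-positive solutions as strict local minima, a mountain pass path between them, and the index shift by one coming from the radial fibering direction---is exactly the paper's. The index mechanism you describe at the end (the second derivative of $t\mapsto\Phi(t\mathbf{u}^*)$ at $t=1$ is strictly negative because $p>3$, so the unconstrained Morse index equals the constrained one plus one) is the paper's argument, modulo your earlier slip calling the radial direction a ``positive direction'' of the Hessian; it is a negative direction, which is precisely why the index goes up rather than down when you remove the constraint.

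The genuine gap is in the positivity step. You assert that elliptic regularity and the strong maximum principle give $u_i^*>0$, but the maximum principle only yields the dichotomy $u_i^*>0$ or $u_i^*\equiv 0$, and nothing in your argument excludes the second alternative: the mountain pass level could be attained by a semi-trivial pair $(\tilde U_1,0)$, where $\tilde U_1$ is a positive solution of the scalar Kirchhoff problem (\ref{2}) different from $U_1$ and of higher energy. Restricting the admissible paths to the set of pairs with both components positive does not repair this: that set is open and incomplete in $\mathcal{N}$, so the deformation lemma does not apply on it and a $(PS)$ sequence may converge to a boundary point with one component identically zero. This is exactly where the hypothesis ``$N=1$ or $\Omega$ radially symmetric''---which your proof never invokes, a sure sign something is missing---enters the paper's proof: by Remark \ref{rem 1.1} it guarantees uniqueness of the positive solution of (\ref{2}) with $f=\lambda (u^+)^p$, so if $u_2^*\equiv 0$ then $u_1^*=U_1$, i.e.\ $\mathbf{u}^*=\mathbf{u}_1$, contradicting $\Phi(\mathbf{u}^*)>\max\{\Phi(\mathbf{u}_1),\Phi(\mathbf{u}_2)\}$. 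Relatedly, you still need to establish $\mathbf{u}^*\ge 0$ in the first place; the paper does this by running the mountain pass for a truncated functional, with different truncations for $\beta>0$ (truncate all terms) and for $-\sqrt{\lambda_1\lambda_2}<\beta\le 0$ (truncate only the pure powers, then test with $u_i^-$ and use $\beta\le 0$), a case distinction your uniform ``pass to $u_i^+$'' glosses over.
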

\begin{rek}\label{rem 1.1}
The condition that $N=1$ or $\Omega$ is radially symmetric is to
ensure the uniqueness of positive solution for problem (\ref{2})
with $f(x,u)=\lambda(u^+)^p$ (see\cite{Anello2010}).
\end{rek}

For $\beta\le -\sqrt{\lambda_1\lambda_2}$, we can get infinitely
many positive  solutions of problem (\ref{1}) under some symmetric
conditions. In fact, since the oddness of nonlinearities and interaction
terms, problem (\ref{1}) satisfies the hypotheses of the Fountain
Theorem (\cite{Willem}) which immediately yields  a sequence of
solutions. However, these solutions may be sign-changing. In order
to obtain positive solutions, we consider the fully symmetric case
\begin{equation}\label{fully sym}
    a:=a_1=a_2,\quad b:=b_1=b_2 \quad and \quad \lambda:=\lambda_1=\lambda_2.
\end{equation}
By (\ref{fully sym}) and $\beta\le-\sqrt{\lambda_1\lambda_2}$,
problem (\ref{1}) has no positive solutions whose two components are
the same. Furthermore, a free $\mathbb{Z}_2-$space can be defined by
a Nehari manifold.
 Using cohomological index of Fadell and Rabinowitz
\cite{Fadell-Rabinowitz1978} for the $\mathbb{Z}_2-$free actions,
we obtain the following multiplicity result.
\begin{thm}\label{thm2}
Assume that condition (\ref{fully sym}) holds. Let $p=q\in(3,
2^*-1)$ and $\beta\le-\lambda$. Then problem (\ref{1}) admits a
sequence of positive solutions $\{\mathbf{u}_k\}$ with
$\|\mathbf{u}_k\|_{\mathcal{L}^{\infty}(\Omega)}\to \infty.$
\end{thm}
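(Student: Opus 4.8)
The strategy is to realise the positive solutions of (\ref{1}) as $\mathbb{Z}_2$-invariant constrained critical points, on a Nehari manifold, of the natural energy functional, and then to extract infinitely many of them by a min-max over $\sigma$-invariant sets of increasing Fadell--Rabinowitz index. Under (\ref{fully sym}) write $a=a_i$, $b=b_i$, $\lambda=\lambda_i$; set $H=H_0^1(\Omega)\times H_0^1(\Omega)$, $\|u_i\|^2=\int_\Omega|\nabla u_i|^2\,\mathrm{d}x$, $\|u\|^2=\|u_1\|^2+\|u_2\|^2$, and
\begin{equation*}
J(u)=\sum_{i=1}^2\Bigl(\tfrac a2\|u_i\|^2+\tfrac b4\|u_i\|^4\Bigr)-\tfrac{\lambda}{p+1}\int_\Omega\bigl((u_1^+)^{p+1}+(u_2^+)^{p+1}\bigr)\mathrm{d}x-\tfrac{2\beta}{p+1}\int_\Omega(u_1^+)^{\frac{p+1}{2}}(u_2^+)^{\frac{p+1}{2}}\mathrm{d}x .
\end{equation*}
Using positive parts is harmless: testing the $i$-th Euler--Lagrange equation against $u_i^-$ annihilates both nonlinear terms pointwise, so every critical point of $J$ has $u_1,u_2\ge 0$ and solves (\ref{1}); moreover $J$ is invariant under the involution $\sigma(u_1,u_2)=(u_2,u_1)$ by (\ref{fully sym}). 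First I would develop the Nehari manifold $\mathcal N=\{u\in H\setminus\{0\}:\,G(u):=\langle J'(u),u\rangle=0\}$. Since $p+1>4$, along every ray on which the nonlinear energy $N(u):=\tfrac{\lambda}{p+1}\int_\Omega\bigl((u_1^+)^{p+1}+(u_2^+)^{p+1}\bigr)+\tfrac{2\beta}{p+1}\int_\Omega(u_1^+)^{\frac{p+1}{2}}(u_2^+)^{\frac{p+1}{2}}$ is positive the fibering map $t\mapsto J(tu)$ has a unique positive critical point, a nondegenerate maximum, so $\mathcal N$ is a $C^1$ manifold that is $\sigma$-equivariantly radially homeomorphic to $\{v\in S:N(v)>0\}$, $S$ the unit sphere of $H$. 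On $\mathcal N$ one has $J(u)=\sum_i\bigl[(\tfrac12-\tfrac1{p+1})a\|u_i\|^2+(\tfrac14-\tfrac1{p+1})b\|u_i\|^4\bigr]$, which is coercive and bounded below by a positive constant (because $\|u\|\ge c_0>0$ on $\mathcal N$, by the Nehari identity, the sign $2\beta\int_\Omega(u_1^+)^{\frac{p+1}{2}}(u_2^+)^{\frac{p+1}{2}}\le 0$ and $H_0^1\hookrightarrow L^{p+1}$, which holds since $p<2^*-1$); that same, compact, embedding gives the Palais--Smale condition for $J|_{\mathcal N}$ with limits remaining in $\mathcal N$; and $\mathcal N$ is a natural constraint since $\langle G'(u),u\rangle=(1-p)\sum_i a\|u_i\|^2+(3-p)\sum_i b\|u_i\|^4<0$ on it.

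The hypothesis $\beta\le-\lambda$ now enters in two ways. \emph{Freeness.} On $\mathrm{Fix}(\sigma)=\{(w,w)\}$ one has $N=\tfrac{2(\lambda+\beta)}{p+1}\int_\Omega(w^+)^{p+1}\le 0$, whereas the Nehari identity would force $N>0$; hence $\mathcal N\cap\mathrm{Fix}(\sigma)=\emptyset$, so $\sigma$ acts freely on $\mathcal N$, the advertised free $\mathbb Z_2$-space. \emph{A quadratic bound for $N$.} From $\beta\le-\lambda$ and $|s^{\frac{p+1}{2}}-t^{\frac{p+1}{2}}|\le C(s^{\frac{p-1}{2}}+t^{\frac{p-1}{2}})|s-t|$ for $s,t\ge 0$ (here $p>3$), one gets $N(v)\le\tfrac{\lambda}{p+1}\int_\Omega\bigl((v_1^+)^{\frac{p+1}{2}}-(v_2^+)^{\frac{p+1}{2}}\bigr)^2\le C\|v_1-v_2\|_{L^{p+1}}^2$ for $v\in S$. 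With these I would prove $i(\mathcal N)=\infty$: given $m$, pick disjoint balls $D_1,\dots,D_m$ with $\overline{D_j}\subset\Omega$ and positive first eigenfunctions $\phi_j$, and set $\Phi(\xi)=\bigl(\sum_j\xi_j^+\phi_j,\sum_j\xi_j^-\phi_j\bigr)$, $\xi\in\mathbb R^m$; $\Phi$ is continuous, never $(0,0)$ for $\xi\ne 0$, equivariant ($\Phi(-\xi)=\sigma\Phi(\xi)$), and has image in $\{N>0\}$ (disjoint supports kill the coupling, so $N=\tfrac{\lambda}{p+1}\sum_j|\xi_j|^{p+1}\int_\Omega\phi_j^{p+1}>0$); composing with the radial projection onto $\mathcal N$ and restricting to $S^{m-1}$ yields a compact $\sigma$-invariant $A_m\subset\mathcal N$ with $i(A_m)\ge i(S^{m-1})=m$.

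Dually, every sublevel set of $J|_{\mathcal N}$ has finite index. By the radial homeomorphism $\{J|_{\mathcal N}\le M\}$ lies, after projection to $S$, inside $\{v\in S:N(v)\ge\delta\}$ for some $\delta=\delta(M)>0$, hence inside $\{v:\|v_1-v_2\|_{L^{p+1}}^2\ge\delta/C\}$ by the quadratic bound; splitting $v_1-v_2$ along the Laplace eigenbasis into low modes $P_k(v_1-v_2)$ and high modes, and bounding the high-mode part in $L^{p+1}$ by Gagliardo--Nirenberg (valid as $p+1<2^*$) by $O(\lambda_{k+1}^{-(1-\theta)})$, one finds $P_k(v_1-v_2)\ne 0$ on this set once $k=k(\delta)$ is large; then $v\mapsto P_k(v_1-v_2)/\|P_k(v_1-v_2)\|$ is a continuous $\mathbb Z_2$-equivariant (antipodal, hence free) map into the unit sphere of the $k$-dimensional space $P_kH_0^1(\Omega)$, so $i(\{J|_{\mathcal N}\le M\})\le k<\infty$. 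Combining $i(\mathcal N)=\infty$, the finite index of sublevels, the coercivity and the Palais--Smale condition, the min-max values $c_k=\inf\{\sup_A J:\ A\subset\mathcal N\text{ compact},\ \sigma(A)=A,\ i(A)\ge k\}$ are finite, nondecreasing, are critical values of $J|_{\mathcal N}$ (equivariant deformation lemma), hence of $J$ (natural constraint), and $c_k\to+\infty$; this produces critical points $\mathbf{u}_k$ of $J$ with $J(\mathbf{u}_k)=c_k\to\infty$ and components $\ge 0$. For large $k$, $\mathbf{u}_k$ is positive: every semi-positive solution $(U,0)$ of (\ref{1}) has energy at most a fixed constant --- with $U=\bigl(\tfrac{a+b\|U\|^2}{\lambda}\bigr)^{-1/(p-1)}V$ it corresponds to a positive solution of $-\Delta V=V^p$, which obeys the classical a priori bound, and since $p>3$ this scaling is compressive, so $\|U\|$ is bounded --- so for $k$ large $c_k$ exceeds that constant, both components of $\mathbf{u}_k$ are nontrivial, and, rewriting the $i$-th equation as $-\Delta u_{k,i}+d_i(x)\,u_{k,i}\ge 0$ with $d_i=\tfrac{|\beta|}{a+b\|u_{k,i}\|^2}\,u_{k,i}^{(p-3)/2}u_{k,3-i}^{(p+1)/2}\ge 0$ (continuous since $p>3$), the strong maximum principle gives $u_{k,i}>0$. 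Finally $c_k=J(\mathbf{u}_k)\to\infty$ together with the coercivity of $J|_{\mathcal N}$ forces $\|\mathbf{u}_k\|\to\infty$, and an $L^q$-elliptic bootstrap in each equation gives $\|\mathbf{u}_k\|_{\mathcal L^{\infty}(\Omega)}\to\infty$; discarding finitely many indices yields the asserted sequence.

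The steps I expect to be most delicate are the two index computations: the infinitude $i(\mathcal N)=\infty$ (cheap, via disjoint supports) balanced against the finiteness of the index of every sublevel set, which is what forces $c_k\to\infty$ and which genuinely exploits both $p>3$ (so that $t\mapsto t^{(p+1)/2}$ is $C^1$ with the right growth and $J|_{\mathcal N}$ is coercive) and $\beta\le-\lambda$ (so that $N$ is dominated by $\|v_1-v_2\|_{L^{p+1}}^2$). The remaining care goes into excluding semi-positive min-max solutions, which rests on the a priori bound for the scalar Kirchhoff problem together with the maximum principle in the presence of the sign-indefinite coupling $\beta(u_1^+)^{(p-1)/2}(u_2^+)^{(p+1)/2}$ --- a third point where $\beta<0$ is indispensable.
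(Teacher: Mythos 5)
Your proposal is correct in substance and reaches the theorem, but it takes a genuinely different route from the paper in two places. First, you work on the standard codimension-one Nehari manifold $\mathcal{N}=\{\mathbf{u}\ne\mathbf{0}:\langle J'(\mathbf{u}),\mathbf{u}\rangle=0\}$ and only afterwards exclude semi-positive critical points, by showing they all have uniformly bounded energy (via the scaling $U=\mu V$ with $\mu^{p-1}=(a+b\|U\|^2)/\lambda$, the Gidas--Spruck a priori bound for $-\Delta V=V^p$, and the fact that $2/(p-1)<1$ when $p>3$), so that $c_k\to\infty$ eventually clears them. The paper instead builds nontriviality of both components into the constraint: it works on a codimension-two set $\mathcal{M}$ on which each component separately satisfies a Nehari-type identity and is required to be $\not\equiv 0$; this removes any need for a priori bounds for the scalar Kirchhoff problem, at the price of a $2\times 2$ negative-definiteness computation showing $\mathcal{M}$ is a natural constraint (the one-parameter scaling of the disjoint-support test functions into the two-constraint set works there because the translated cut-offs have identical norms, so the two constraints coincide on the chosen subspace). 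Second, to get $c_k\to\infty$ you prove directly that every sublevel set of $J|_{\mathcal{N}}$ has finite Fadell--Rabinowitz index, using $\beta\le-\lambda$ to dominate the nonlinear energy by $\|v_1-v_2\|_{L^{p+1}}^2$ and then mapping equivariantly into a finite-dimensional sphere via spectral projection; the paper reaches the same conclusion more cheaply by contradiction, from compactness of $K_c$ (Palais--Smale), the continuity property $(i_3)$ of the index, and the equivariant deformation lemma. Your version is more quantitative but also more work, and it imports a nontrivial external input (the Gidas--Spruck bound) that the paper avoids. Everything else --- the free $\mathbb{Z}_2$-action $\sigma(u_1,u_2)=(u_2,u_1)$, the disjoint-support construction giving unbounded index, nonnegativity by testing with $u_i^-$, and positivity by the strong maximum principle --- matches the paper. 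One small repair: the final step $\|\mathbf{u}_k\|_{\mathcal{L}^{\infty}(\Omega)}\to\infty$ does not follow from elliptic bootstrap (which bounds $L^{\infty}$ from above); it follows from the Nehari identity, $a\|u_{k,i}\|^2\le\lambda\int_{\Omega}(u_{k,i}^+)^{p+1}\,\mathrm{d}x\le\lambda|\Omega|\,\|u_{k,i}\|_{L^{\infty}(\Omega)}^{p+1}$, combined with $\|\mathbf{u}_k\|\to\infty$.
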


 Next, we consider the following critical growth case:
\begin{equation}\label{critical case}
p=5  \quad and\,\quad  q\in(3,5)\,\quad with\,\quad N=3.
\end{equation}
Since the dimension is $N=3$, $6=\tfrac{2N}{N-2}$ is the critical
Sobolev exponent. Hence there are critical nonlinearities and
coupling interaction terms in this nonlocal system. To the authors'
knowledge, however, there are few results on nonlocal systems with
critical nonlinearities. In the case of a single nonlocal
Kirchhoff-type equation with critical nonlinearity in $\mathbb{R}^3$
\begin{equation*}
    \begin{cases}
-[M(\int_{\Omega} |\nabla u|^2\,\mathrm{d}x)]\Delta u=
 \lambda f(x,u)+ u^5 \quad in \ \Omega,\\
u>0  \quad in \; \Omega,\qquad u=0 \quad on\ \partial\Omega,
\end{cases}
\end{equation*}
Alves-Correa-Figueiredo \cite{Alves-Correa-Fig2010} obtained the
existence of positive solutions for sufficiently large $\lambda>0$
($f$ is subcritical growth). They used the concentration compactness
principle to prove that the compactness is recovered when
$\lambda>0$ is sufficiently large. Inspired by this, we shall prove
the following existence result for problem (\ref{1}).
\begin{thm}\label{thm3}
If condition (\ref{critical case}) holds, then problem (\ref{1})
has a positive least energy solution for sufficiently large
$\beta>0$.
\end{thm}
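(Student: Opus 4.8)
The plan is to realize a least energy solution as a minimizer of the energy functional $I_\beta$ restricted to the Nehari manifold $\mathcal{N}_\beta$ associated with problem (\ref{1}) in the critical case (\ref{critical case}), and to recover compactness by exploiting the smallness of the minimax level when $\beta$ is large. First I would set up the variational framework on $H := H_0^1(\Omega)\times H_0^1(\Omega)$, with
\[
I_\beta(\mathbf{u}) = \sum_{i=1}^2\left(\frac{a_i}{2}\int_\Omega |\nabla u_i|^2 + \frac{b_i}{4}\Big(\int_\Omega |\nabla u_i|^2\Big)^2 - \frac{\lambda_i}{6}\int_\Omega (u_i^+)^6\right) - \frac{2\beta}{q+1}\int_\Omega (u_1^+)^{\tfrac{q+1}{2}}(u_2^+)^{\tfrac{q+1}{2}},
\]
and the Nehari manifold $\mathcal{N}_\beta = \{\mathbf{u}\in H : u_i\neq 0 \text{ for } i=1,2,\ \langle I_\beta'(\mathbf{u}),\mathbf{u}\rangle = 0\}$; I would restrict to the positive cone, using that replacing $u_i$ by $u_i^+$ does not increase the energy and that any nontrivial critical point of $I_\beta|_{\mathcal N_\beta}$ with both components nonzero is, by the maximum principle and $b_i>0$, a genuine positive solution. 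Set $c_\beta = \inf_{\mathcal N_\beta} I_\beta$. One checks $\mathcal N_\beta \neq \emptyset$: for fixed $\mathbf{v}$ with both components positive, the fibering map $t\mapsto I_\beta(t\mathbf v)$ has, for $\beta>0$, a unique positive maximum (here the $b_i$-terms are of order $t^4$ and the critical and coupling terms of order $t^6$ since $p=5$, $\tfrac{q+1}{2}$ summed to $q+1$; the $q<5$ coupling term is of order $t^{q+1}<t^6$, which is harmless), so $c_\beta$ is well defined and positive.

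Next I would estimate $c_\beta$ from above and show $c_\beta \to 0$ as $\beta\to+\infty$. Using a test pair of the form $\mathbf{v}_\beta = (\theta_\beta\varphi,\theta_\beta\varphi)$ for a fixed $0\le\varphi\in H_0^1(\Omega)$, $\varphi\neq 0$, and optimizing the scaling, the coupling term $\frac{2\beta}{q+1}\int (\theta_\beta\varphi)^{q+1}$ forces the optimal $\theta_\beta\to 0$ as $\beta\to\infty$ (because a strongly negative-energy direction is available only after shrinking the amplitude), giving $c_\beta \le C\beta^{-2/(q-1)}\to 0$; I would carry out this one-variable optimization carefully, distinguishing the dominant terms. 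Then I would prove a local Palais–Smale / compactness lemma: there is a threshold, governed by the best Sobolev constant $S$ (concretely a level of the form $\min_i \frac{a_i^{3/2}}{3\lambda_i^{1/2}}S^{3/2}$ coming from the energy of a standard bubble for the critical single equation), below which every $(PS)_c$ sequence for $I_\beta$ has a convergent subsequence. The proof is the usual concentration–compactness / Brezis–Nirenberg dichotomy adapted to systems: a $(PS)_c$ sequence is bounded (using the Nehari constraint and $b_i>0$, which actually helps since the quartic term is superquadratic), converges weakly to a critical point $\mathbf u_0$, and the defect measures concentrate masses that are quantized in units of $S^{3/2}$; since $c_\beta$ is below the first quantum for $\beta$ large, no concentration occurs and convergence is strong.

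Finally I would combine these: fix $\beta$ large enough that $c_\beta$ lies strictly below the compactness threshold. By Ekeland's variational principle on $\mathcal N_\beta$ there is a minimizing $(PS)_{c_\beta}$ sequence for $I_\beta$ (using that $\mathcal N_\beta$ is a natural constraint, so the Lagrange multiplier vanishes); by the compactness lemma it converges strongly to some $\mathbf u^*\in\mathcal N_\beta$ with $I_\beta(\mathbf u^*) = c_\beta$ and $I_\beta'(\mathbf u^*) = 0$. It remains to check that $\mathbf u^*$ has both components nonzero — here I would argue that if, say, $u_2^* = 0$, then $\mathbf u^* = (U_1,0)$ with $U_1$ a solution of the single critical Kirchhoff equation, whose energy is $\ge \min_i \frac{a_i^{3/2}}{3\lambda_i^{1/2}}S^{3/2}$, contradicting $I_\beta(\mathbf u^*) = c_\beta$ small; the strong maximum principle then upgrades $u_i^*\ge 0$, $u_i^*\neq 0$ to $u_i^*>0$, so $\mathbf u^*$ is a positive least energy solution. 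I expect the main obstacle to be the compactness lemma: pinning down the exact value of the concentration threshold for the \emph{system} (rather than a single equation) and verifying that the Kirchhoff quartic terms do not lower it — one must rule out the possibility that a coupled concentration of both components produces a smaller critical level — and then matching this against the sharp upper bound $c_\beta\to 0$. The secondary technical point is the fibering-map analysis on $\mathcal N_\beta$ showing it is a manifold of codimension one (or two) and a natural constraint, which requires checking the relevant Jacobian is nonzero, again using $b_i>0$ and $\beta>0$.
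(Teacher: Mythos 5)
Your proposal is correct and its engine is the same as the paper's: a minimax/minimization level that tends to $0$ as $\beta\to\infty$ (via the diagonal test pair $(t\phi,t\phi)$ with the coupling term forcing the optimal amplitude to shrink), played against a fixed positive concentration--compactness threshold of order $S^{3/2}$, with the coupling term harmless in the concentration analysis because $q+1<6$ is subcritical. Two sub-arguments differ. First, you minimize over a Nehari manifold and invoke Ekeland, whereas the paper runs the Mountain Pass Lemma without $(PS)$ and only afterwards identifies $c_*(\beta)=\inf_{\mathcal N}\Psi$; these are interchangeable here, but note that your $\mathcal N_\beta$ (requiring \emph{both} components nonzero) is a relatively open, hence non-complete, subset, so Ekeland should be applied on the full Nehari manifold $\{\mathbf u\neq\mathbf 0:\langle I_\beta'(\mathbf u),\mathbf u\rangle=0\}$, which is closed and bounded away from the origin; this costs nothing since semitrivial elements are excluded a posteriori. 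Second, and more substantively, you rule out semitrivial limits by the direct quantitative bound $J(u)=\tfrac{a_1}{3}\|u\|^2+\tfrac{b_1}{12}\|u\|^4\ge \tfrac{a_1^{3/2}}{3\lambda_1^{1/2}}S^{3/2}$ for any nontrivial element of the single-equation Nehari set, which contradicts $c_\beta\to 0$; the paper instead argues that a semitrivial minimizer would be a least energy solution of the single critical Kirchhoff equation and would force $S$ to be attained on a bounded domain, contradicting Struwe's non-attainment theorem. Your route is more elementary and self-contained (it needs only the Sobolev inequality and the Nehari identity, not non-attainment of $S$), and it also yields the explicit decay rate $c_\beta\le C\beta^{-2/(q-1)}$, which the paper does not record. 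The one point you flag as open --- whether coupled concentration could lower the threshold --- resolves exactly as in the paper: the cut-off test in $\langle\Psi'(\mathbf u_n),(\varphi_\rho u_{n,1},0)\rangle$ shows the coupling integral vanishes as $\rho\to 0$ by H\"older, since its total exponent $q+1$ is strictly subcritical, so each concentration atom carries at least the single-equation quantum.
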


In fact, the conclusion of {\emph{Theorem \ref{thm3}}} is also valid
for subcritical growth case (i.e., condition (\ref{subcritical
case})). This jointly with {\emph{Theorem \ref{thm1}}} yields that

\begin{cor}\label{cor 1.4}
Under the assumptions of Theorem \ref{thm1}, problem (\ref{1})
admits at least two positive solutions for sufficiently large
$\beta>0$, one of which has Morse index one and the other at least two.
\end{cor}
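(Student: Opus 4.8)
The plan is to realize the two positive solutions as, respectively, the mountain-pass solution of \emph{Theorem \ref{thm1}} and the positive least energy solution furnished by the subcritical version of \emph{Theorem \ref{thm3}}, and then to tell them apart by a Morse-index count. Fix $\beta>0$ so large that the (subcritical) conclusion of \emph{Theorem \ref{thm3}} holds; then $\beta>0>-\sqrt{\lambda_1\lambda_2}$, so \emph{Theorem \ref{thm1}} applies at the same time and yields a positive solution $\mathbf{u}^{*}$ of (\ref{1}) with Morse index at least two. The subcritical analogue of \emph{Theorem \ref{thm3}} gives a second solution $\mathbf{u}_{\beta}$ of (\ref{1}), positive in both components, which minimizes the energy functional $J$ associated with (\ref{1}) over the Nehari manifold $\mathcal{N}$ (every nontrivial solution lies on $\mathcal{N}$, and the least energy solution attains $\inf_{\mathcal N}J$). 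Hence the whole proof reduces to showing that the Morse index of $\mathbf{u}_{\beta}$ equals one: once this is done, $\mathbf{u}_{\beta}\neq\mathbf{u}^{*}$, and $\{\mathbf{u}_{\beta},\mathbf{u}^{*}\}$ are the two required positive solutions with the stated indices.

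For the Morse-index computation I would argue as for ground states of superlinear variational problems. By the construction of $\mathcal{N}$ used in \emph{Lemma \ref{2.1}} (the hypothesis $\beta>-\sqrt{\lambda_1\lambda_2}$ is precisely what makes $\mathcal{N}$ a $C^{2}$ manifold of codimension one in $H:=H_0^1(\Omega)\times H_0^1(\Omega)$), every $\mathbf{u}\in\mathcal{N}$ is the unique positive maximum point of the fibering map $t\mapsto J(t\mathbf{u})$ on $(0,\infty)$, and since $p=q>3$ the $(p+1)$-homogeneous power and coupling terms strictly dominate the quartic Kirchhoff terms along each ray, so that $\frac{\mathrm{d}^{2}}{\mathrm{d}t^{2}}J(t\mathbf{u})\big|_{t=1}=J''(\mathbf{u})[\mathbf{u},\mathbf{u}]<0$. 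Applying this at $\mathbf{u}=\mathbf{u}_{\beta}$ exhibits a one-dimensional negative subspace for $J''(\mathbf{u}_{\beta})$, so the Morse index of $\mathbf{u}_{\beta}$ is at least one. Conversely, since $\mathbf{u}_{\beta}$ minimizes $J$ on the codimension-one manifold $\mathcal{N}$, the bilinear form $J''(\mathbf{u}_{\beta})$ is nonnegative on the tangent space $T_{\mathbf{u}_{\beta}}\mathcal{N}$; therefore any subspace of $H$ on which $J''(\mathbf{u}_{\beta})$ is negative definite meets $T_{\mathbf{u}_{\beta}}\mathcal{N}$ only at the origin and so has dimension at most $\operatorname{codim}T_{\mathbf{u}_{\beta}}\mathcal{N}=1$. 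Thus the Morse index of $\mathbf{u}_{\beta}$ is at most one, hence exactly one.

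Putting the pieces together, $\mathbf{u}^{*}$ and $\mathbf{u}_{\beta}$ are positive solutions of (\ref{1}) with Morse indices at least two and exactly one, respectively; in particular $\mathbf{u}^{*}\neq\mathbf{u}_{\beta}$ (the index bound even forces $J(\mathbf{u}^{*})>\min_{\mathcal N}J=J(\mathbf{u}_{\beta})$, since any minimizer over $\mathcal N$ has Morse index $\le 1$ by the argument above), which proves the corollary. I expect the only genuinely delicate point to be the strict inequality $\frac{\mathrm{d}^{2}}{\mathrm{d}t^{2}}J(t\mathbf{u})\big|_{t=1}<0$ uniformly on $\mathcal{N}$, i.e. controlling the competition between the quartic nonlocal Kirchhoff terms and the superquartic power and interaction terms; the remaining steps are immediate from \emph{Theorems \ref{thm1} and \ref{thm3}} together with the standard structure of the Nehari manifold built in \emph{Lemma \ref{2.1}}.
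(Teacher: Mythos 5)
Your overall architecture matches the paper's: the second solution is the large-$\beta$ least energy (mountain-pass) solution at the level $c^{+}_{*}(\beta)=\inf_{\mathcal N^{+}}\Phi^{+}$, and the first is the solution $\mathbf{u}^{*}$ of \emph{Theorem \ref{thm1}}. Your Morse-index computation for the minimizer is sound: by \emph{Lemma \ref{2.1}}(iii) the Lagrange multiplier vanishes, so the free Hessian restricted to $T_{\mathbf{u}_\beta}\mathcal N$ coincides with the constrained one and is nonnegative at a minimizer, while $\langle\Phi''(\mathbf{u})\mathbf{u},\mathbf{u}\rangle=(1-p)(a_1\|u_1\|^2+a_2\|u_2\|^2)+(3-p)(b_1\|u_1\|^4+b_2\|u_2\|^4)<0$ on $\mathcal N$ (this is exactly (\ref{negative definite of F}), so the ``delicate point'' you worry about is already settled by $p>3$). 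Where you diverge from the paper is in how the two solutions are told apart: the paper compares energies, noting $\Phi^{+}(\mathbf{u}^{*})\ge\max\{\Phi^{+}(\mathbf{u}_1),\Phi^{+}(\mathbf{u}_2)\}$ is a fixed positive constant while $c^{+}_{*}(\beta)\to 0$, whereas you use the index dichotomy (exactly one versus at least two). Your route has the advantage of actually proving the ``Morse index one'' clause of the statement, which the paper leaves implicit; the paper's energy comparison is the more elementary separation argument.

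The one place you are too quick is in invoking ``the subcritical analogue of Theorem \ref{thm3}'' as a black box. That analogue is asserted in the introduction but is proved nowhere except inside the paper's proof of this very corollary, and its proof is most of the work. Moreover, the key step --- showing that neither component of the least energy solution vanishes --- cannot be imported verbatim from the critical case: there the paper rules out $\bar u_2\equiv 0$ via non-attainment of the Sobolev constant $S$ on bounded domains, an argument unavailable for subcritical $p$. In the subcritical setting one must instead argue that if $\tilde u_2\equiv 0$ then, by the uniqueness hypothesis of \emph{Remark \ref{rem 1.1}}, $\tilde u_1=U_1$, so $\Phi^{+}(\tilde{\mathbf{u}})=\Phi^{+}(U_1,0)$ is a fixed positive number independent of $\beta$, contradicting $c^{+}_{*}(\beta)\to 0$. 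You should supply this step (and the analogues of \emph{Lemmas \ref{Mountainpass Geometry}} and \emph{\ref{concentration lemma}} for $\Phi^{+}$, together with the $(PS)$ condition, which here is free since the growth is subcritical) to make the proof self-contained; with it, your argument is complete.
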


This paper is organized as follows:
\begin{itemize}
  \item Section 2: proof  of Theorem \ref{thm1};
  \item Section 3: proof of Theorem \ref{thm2};
  \item Section 4: proofs of Theorem \ref{thm3} and Corollary \ref{cor 1.4}.
\end{itemize}

Notation

\begin{itemize}
  \item $H=H_0^1(\Omega)$ endowed with scalar product and norm

  $\langle u,v\rangle=\int_{\Omega}\nabla u\cdot\nabla v\;\mathrm{d}\,x, \qquad \|u\|^2=\langle
  u,u\rangle;$
  \item $\mathcal{H}=H\times H$ whose elements will be denoted by
  $\mathbf{u}=(u_1,u_2)$; its norm is

 $ \|\mathbf{u}\|^2=\|u_1\|^2+\|u_2\|^2;$

 \item $\mathcal{L}^p(\Omega)=L^p(\Omega)\times L^p(\Omega)$ with its norm $|\mathbf{u}|_p^p=|u_1|^p_p+|u_2|^p_p$
 for any $p\in(1,\infty)$;

  \item $$S\stackrel{\mathrm{def}}{=} \inf_{u\in H\setminus\{0\} \atop |u|_6=1}\int_{\Omega}|\nabla
  u|^2\quad\mathrm{d}x.$$
\end{itemize}

\section{The proof of Theorem \ref{thm1}}
We assume that $p=q\in(3,2^*-1)$ and
$\beta>-\sqrt{\lambda_1\lambda_2}$ in the whole section.

For $\mathbf{u}:=(u_1,u_2)\in\mathcal{H}$, we set
\begin{eqnarray*}
  \Phi(\mathbf{u})&=& \frac{1}{2}(a_1\|u_1\|^2+a_2\|u_2\|^2)+
    \frac{1}{4}(b_1\|u_1\|^4+b_2\|u_2\|^4) \\
   && -\frac{1}{p+1}\int_{\Omega}(\lambda_1|u_1|^{p+1}+\lambda_2|u_2|^{p+1}
    +2\beta|u_1|^{\frac{p+1}{2}}|u_2|^{\frac{p+1}{2}})\,\mathrm{d}x.
\end{eqnarray*}
Then $\Phi\in\mathcal{C}^2(\mathcal{H},\mathbb{R})$ by Sobolev
embedding theorem. We have that critical points of $\Phi$ are
solutions of problem (\ref{1}).

Next, we introduce the Nehari manifold
\begin{equation*}
    \mathcal{N}=\{\;\mathbf{u}\in\mathcal{H}\setminus\{\mathbf{0}\}:
    \quad F(\mathbf{u})\stackrel{\mathrm{def}}{=}\langle\Phi^{'}(\mathbf{u}),\mathbf{u}\rangle=0\}.
\end{equation*}
Clearly, all nontrivial critical points  of $\Phi$ are contained in
$\mathcal{N}$.
\begin{lem}\label{2.1}
$(i)$ $\mathcal{N}$ is homeomorphic to the unit sphere of
$\mathcal{H}$, and  there exists $\rho>0$  such that
$\|\mathbf{u}\|\ge \rho$, \;$\forall \mathbf{u}\in\mathcal{N}$.

$(ii)$ $\mathcal{N}$ is a  $\mathcal{C}^1$ complete manifold of
codimension one in $\mathcal{H}$.

$(iii)$ If $\mathbf{u}$ is a critical point of the restriction
$\Phi_{\mathcal{N}}$ of $\Phi$ to $\mathcal{N}$, then $\mathbf{u}$
is a nontrivial critical point of $\Phi$.

$(iv)$
$\Phi(\mathbf{u})=\frac{p-1}{2(p+1)}(a_1\|u_1\|^2+a_2\|u_2\|^2)$ $+
$$\frac{p-3}{4(p+1)}(b_1\|u_1\|^4+b_2\|u_2\|^4)$
, for all $\mathbf{u}\in \mathcal{N}$.

 $(v)$ \label{PS}$\Phi_{\mathcal{N}}$ satisfies Palais-Smale condition.
\end{lem}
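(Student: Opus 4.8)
The plan is to verify Lemma \ref{2.1} part by part, with the bulk of the work being the standard Nehari-manifold machinery adapted to the Kirchhoff (quartic) nonlinearity.

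\medskip

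\textbf{Parts (i)--(iii): the Nehari structure.} First I would show that for each $\mathbf{u}\in\mathcal{H}\setminus\{\mathbf{0}\}$ there is a unique $t=t(\mathbf{u})>0$ with $t\mathbf{u}\in\mathcal{N}$. Plugging $t\mathbf{u}$ into $F$, one gets $F(t\mathbf{u})=t^2 A(\mathbf{u})+t^4 B(\mathbf{u})-t^{p+1}C(\mathbf{u})$, where $A(\mathbf{u})=a_1\|u_1\|^2+a_2\|u_2\|^2>0$, $B(\mathbf{u})=b_1\|u_1\|^4+b_2\|u_2\|^4>0$, and $C(\mathbf{u})=\int_\Omega(\lambda_1|u_1|^{p+1}+\lambda_2|u_2|^{p+1}+2\beta|u_1|^{(p+1)/2}|u_2|^{(p+1)/2})\,\mathrm{d}x$. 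The key algebraic point is that $\beta>-\sqrt{\lambda_1\lambda_2}$ forces $C(\mathbf{u})>0$ whenever $\mathbf{u}\neq\mathbf{0}$: indeed $\lambda_1 s^2+2\beta st+\lambda_2 t^2$ with $s=|u_1|^{(p+1)/2},t=|u_2|^{(p+1)/2}$ is a positive-definite (or positive semidefinite with equality only at $0$) quadratic form, so pointwise $\lambda_1|u_1|^{p+1}+2\beta|u_1|^{(p+1)/2}|u_2|^{(p+1)/2}+\lambda_2|u_2|^{p+1}\ge \delta(|u_1|^{p+1}+|u_2|^{p+1})$ for some $\delta>0$; integrating gives $C(\mathbf{u})>0$. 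Since $p+1>4$, the function $t\mapsto t^2 A+t^4 B-t^{p+1}C$ is positive for small $t>0$, tends to $-\infty$, and dividing by $t^2$ shows $g(t):=A+t^2 B-t^{p-1}C$ is strictly decreasing on $(0,\infty)$ (because $(t^2B)'=2tB$ is dominated in the relevant sense — more precisely $g'(t)=2tB-(p-1)t^{p-2}C$; one checks $g$ has exactly one zero by noting $t\mapsto (A+t^2B)/t^{p-1}$ is strictly decreasing since $p-1>2$, hence equals $C$ at a unique point). This gives the unique projection $t(\mathbf{u})$, continuous in $\mathbf{u}$, and the homeomorphism $\mathbf{v}\mapsto t(\mathbf{v})\mathbf{v}$ between the unit sphere and $\mathcal{N}$; this proves (i) together with the lower bound $\|\mathbf{u}\|\ge\rho$ (from $A(\mathbf{u})\le \text{const}\cdot A(\mathbf{u})+B(\mathbf{u})=C(\mathbf{u})\le \text{const}\,\|\mathbf{u}\|^{p+1}$ and $A(\mathbf{u})\ge \min\{a_1,a_2\}\|\mathbf{u}\|^2$). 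For (ii), $\mathcal{N}=F^{-1}(0)$ with $\langle F'(\mathbf{u}),\mathbf{u}\rangle=2A+4B-(p+1)C=2A+4B-(p+1)(A+B)=-(p-1)A-(p-3)B<0$ on $\mathcal{N}$, so $0$ is a regular value and $\mathcal{N}$ is a $\mathcal{C}^1$ manifold of codimension one; completeness follows from the lower bound $\|\mathbf{u}\|\ge\rho$ and closedness. For (iii), if $\mathbf{u}$ is a critical point of $\Phi|_{\mathcal{N}}$ then $\Phi'(\mathbf{u})=\mu F'(\mathbf{u})$ for some Lagrange multiplier $\mu$; pairing with $\mathbf{u}$ gives $0=F(\mathbf{u})=\mu\langle F'(\mathbf{u}),\mathbf{u}\rangle$, and since $\langle F'(\mathbf{u}),\mathbf{u}\rangle<0$ we get $\mu=0$, hence $\Phi'(\mathbf{u})=0$.

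\medskip

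\textbf{Part (iv): the energy formula.} On $\mathcal{N}$ one has $C(\mathbf{u})=A(\mathbf{u})+B(\mathbf{u})$, so substituting into $\Phi(\mathbf{u})=\tfrac12 A+\tfrac14 B-\tfrac1{p+1}C$ gives $\Phi(\mathbf{u})=(\tfrac12-\tfrac1{p+1})A+(\tfrac14-\tfrac1{p+1})B=\tfrac{p-1}{2(p+1)}A(\mathbf{u})+\tfrac{p-3}{4(p+1)}B(\mathbf{u})$, which is the claimed identity. Note this already shows $\Phi$ is bounded below and coercive on $\mathcal{N}$ (both coefficients are positive since $p>3$), which will be reused in (v).

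\medskip

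\textbf{Part (v): the Palais--Smale condition.} Let $\{\mathbf{u}_n\}\subset\mathcal{N}$ with $\Phi(\mathbf{u}_n)\to c$ and $\Phi|_{\mathcal{N}}'(\mathbf{u}_n)\to 0$. By the energy formula in (iv), $\{A(\mathbf{u}_n)\}$ and $\{B(\mathbf{u}_n)\}$ are bounded, hence $\{\mathbf{u}_n\}$ is bounded in $\mathcal{H}$; pass to a subsequence $\mathbf{u}_n\rightharpoonup\mathbf{u}$ weakly in $\mathcal{H}$ and strongly in $\mathcal{L}^{p+1}(\Omega)$ by the compact Sobolev embedding (here $p+1<2^*$ is used crucially — this is the subcritical case). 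The standard deformation/Lagrange argument gives $\Phi'(\mathbf{u}_n)\to 0$ in $\mathcal{H}^*$ (using that the multiplier $\mu_n\to 0$, which follows from $\langle F'(\mathbf{u}_n),\mathbf{u}_n\rangle$ being bounded away from $0$ by the computation in (ii), i.e. $\le -(p-1)\rho^2\min\{a_i\}$). Then $\langle\Phi'(\mathbf{u}_n),\mathbf{u}_n-\mathbf{u}\rangle\to 0$; the superlinear terms converge by strong $\mathcal{L}^{p+1}$ convergence, and one is left with $(a_i+b_i\|u_{i,n}\|^2)\langle u_{i,n},u_{i,n}-u_i\rangle\to 0$. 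Since $a_i+b_i\|u_{i,n}\|^2\ge a_i>0$ is bounded below and $\langle u_{i,n},u_{i,n}-u_i\rangle=\|u_{i,n}-u_i\|^2+\langle u_i,u_{i,n}-u_i\rangle$ with the last term $\to0$ by weak convergence, we conclude $\|u_{i,n}-u_i\|\to0$, i.e. $\mathbf{u}_n\to\mathbf{u}$ strongly in $\mathcal{H}$.

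\medskip

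I expect the main obstacle to be part (i): establishing the uniqueness of the Nehari projection $t(\mathbf{u})$ must genuinely use $p-1>2$ together with the sign condition $\beta>-\sqrt{\lambda_1\lambda_2}$ to guarantee $C(\mathbf{u})>0$, since for $\beta$ very negative the Nehari manifold can fail to be well defined (as the paper itself flags). The monotonicity argument for $g(t)=A+t^2B-t^{p-1}C$ needs care because it is not monotone for all exponents — it is the combination $p-1>2>0$ that makes $(A+t^2B)/t^{p-1}$ strictly decreasing. Everything else is routine; part (v) is the usual Nehari--Palais--Smale argument and works precisely because we are subcritical.
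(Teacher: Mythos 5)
Your proposal is correct and follows essentially the same route as the paper: positivity of the coupling integral from $\beta>-\sqrt{\lambda_1\lambda_2}$, uniqueness of the Nehari projection from the monotonicity of $t\mapsto(A+t^2B)/t^{p-1}$ (the paper phrases this as the function $At^{p-1}-Bt^2-C$ having a single positive zero, but it is the same use of $p>3$), the sign $\langle F'(\mathbf{u}),\mathbf{u}\rangle<0$ on $\mathcal{N}$ for the manifold structure and the vanishing of the Lagrange multiplier, the identity $C=A+B$ for (iv), and the Kirchhoff-coefficient argument $(a_i+b_i\|u_{n,i}\|^2)\langle u_{n,i},u_{n,i}-u_i\rangle\to 0$ for (v). The only cosmetic difference is that you recover strong convergence via the identity $\langle u_{n,i},u_{n,i}-u_i\rangle=\|u_{n,i}-u_i\|^2+o(1)$ where the paper invokes Fatou's lemma; both are standard and equivalent here.
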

\begin{proof}
(i) For any $\mathbf{u}\in\mathcal{H}\setminus\{\mathbf{0}\}$, one
has that
\begin{eqnarray}\label{the exist of t}
  \nonumber t\mathbf{u}\in \mathcal{N}&\Longleftrightarrow& a_1\|u_1\|^2+a_2\|u_2\|^2+
    t^2(b_1\|u_1\|^4+b_2\|u_2\|^4) \\
   && =t^{p-1}\int_{\Omega}(\lambda_1|u_1|^{p+1}+\lambda_2|u_2|^{p+1}
    +2\beta|u_1|^{\frac{p+1}{2}}|u_2|^{\frac{p+1}{2}})\,\mathrm{d}x.
\end{eqnarray}

Notice that for any $\beta>-\sqrt{\lambda_1\lambda_2}$ and
$\mathbf{u}\in\mathcal{H}\setminus\{\mathbf{0}\}$
\begin{equation*}
    \int_{\Omega}(\lambda_1|u_1|^{p+1}+\lambda_2|u_2|^{p+1}
    +2\beta|u_1|^{\frac{p+1}{2}}|u_2|^{\frac{p+1}{2}})\,\mathrm{d}x>0.
\end{equation*}

 We claim that for any $\mathbf{u}\in\mathcal{H}\setminus\{\mathbf{0}\}$
there exists a unique $t(\mathbf{u})\in\mathbb{R}^{+}$ such that
$t(\mathbf{u})\mathbf{u}\in\mathcal{N}$.

 Indeed, for fixed $\mathbf{u}\in\mathcal{H}\setminus\{\mathbf{0}\}$, $(\ref{the exist of t})$ implies that $t\mathbf{u}\in\mathcal{N}$ if and only if $t$  is a positive zero of the  following function
 \begin{equation}\label{the fuc of f}
    f(t)=At^{p-1}-Bt^{2}-C,
\end{equation}
where $A,B,C$ are positive constants $($depending on $\mathbf{u}$$)$. Notice that
$f(0)<0,\,f^{'}(0)=0,\;  f^{''}(0)<0,\; f(+\infty)=+\infty\quad $and
 there exists a unique $t_1>0$ such that $f^{'}(t_1)=0$.
Thus, there exists a unique positive number $t^{*}$ such
 that $f(t^{*})=0$. Hence, there exists a unique $t(\mathbf{u})\in\mathbb{R}^{+}$
 such that $t(\mathbf{u})\mathbf{u}\in\mathcal{M}.$

 In order to prove the continuity of $t(\mathbf{u})$, we assume that
 $\mathbf{u}_n\to \mathbf{u}_0$ in
 $\mathcal{H}\setminus\{\mathbf{0}\}$.
 It follows from (\ref{the exist of t}) that $\{t(\mathbf{u}_n)\}$ is
 bounded. Passing if necessary to a subsequence, we can assume that
 $t(\mathbf{u}_n)\to t_0,$ then $t_{0}=t(\mathbf{u}_0)$
 by $(\ref{the exist of t})$ and the uniqueness of $t(\mathbf{u}_0)$.
 Hence $t(\mathbf{u}_n)\to t(\mathbf{u}_0)$. Moreover, the
 inverse map of $t(\mathbf{u})|_{S^{\infty}}: S^{\infty}\to \mathcal{N}$
can be defined by
\begin{equation*}
   \mathbf{u}\to \,\mathbf{u}/\|\mathbf{u}\|,
\end{equation*}
which is also continuous. Therefore, the Nehari manifold
$\mathcal{N}$ is homeomorphic to the unit sphere of $\mathcal{H}$.

Moreover, if $\mathbf{u}=(u_1,u_2)\in\mathcal{N}$ then by H{\"o}lder
inequality and Sobolev embedding theorem, we get that
\begin{eqnarray*}
   &&  a_1\|u_1\|^2+a_2\|u_2\|^2+b_1\|u_1\|^4+b_2\|u_2\|^4 \\
   &=&  \int_{\Omega}(\lambda_1|u_1|^{p+1}+\lambda_2|u_2|^{p+1}
    +2\beta|u_1|^{\frac{p+1}{2}}|u_2|^{\frac{p+1}{2}})\,\mathrm{d}x\\
  &\le& c_1\|u_1\|^{p+1}+c_2\|u_2\|^{p+1}.
\end{eqnarray*}
It follows from $p>3$ that there exists $\rho>0$ such that
$\|\mathbf{u}\|\ge \rho$ for any
 $\mathbf{u}\in\mathcal{N}$.

(ii) Notice that for any $\mathbf{u}\in\mathcal{H}\setminus\{\mathbf{0}\}$
\begin{eqnarray*}
   \langle F^{'}(\mathbf{u}),\mathbf{u}\rangle&=&2(a_1\|u_1\|^2+a_2\|u_2\|^2)+
    4(b_1\|u_1\|^4+b_2\|u_2\|^4) \\
   &&-(p+1)\int_{\Omega}(\lambda_1|u_1|^{p+1}+\lambda_2|u_2|^{p+1}
    +2\beta|u_1|^{\frac{p+1}{2}}|u_2|^{\frac{p+1}{2}})\,\mathrm{d}x,
\end{eqnarray*}
and
\begin{eqnarray}\label{equiv of N}
            \nonumber \mathbf{u}\in\mathcal{N} &\Leftrightarrow& a_1\|u_1\|^2+a_2\|u_2\|^2+b_1\|u_1\|^4+b_2\|u_2\|^4 \\
             && =\int_{\Omega}(\lambda_1|u_1|^{p+1}+\lambda_2|u_2|^{p+1}
    +2\beta|u_1|^{\frac{p+1}{2}}|u_2|^{\frac{p+1}{2}})\,\mathrm{d}x.
           \end{eqnarray}
It follows that
\begin{eqnarray}\label{negative definite of F}
  \nonumber \langle F^{'}(\mathbf{u}),\mathbf{u}\rangle&=& (1-p)(a_1\|u_1\|^2+a_2\|u_2\|^2)+(3-p)(b_1\|u_1\|^4+b_2\|u_2\|^4) \\
  \nonumber &<& -2\,\rho^2\,\min\{a_1,a_2\} \\
   &<& 0,\quad \forall  \;\mathbf{u}\in\mathcal{N}.
\end{eqnarray}
This jointly with (i) and Implicit Function Theorem yields that
$\mathcal{N}$ is a $\mathcal{C}^1$ complete manifold of codimension
one in $\mathcal{H}$.

(iii) If $\mathbf{u}$ is a critical point of $\Phi_{\mathcal{N}}$,
then there exists $\omega(\mathbf{u})\in\mathbb{R}$ such that
$\Phi^{'}(\mathbf{u})=\omega(\mathbf{u})F^{'}(\mathbf{u})$. It
follows from the fact $\mathbf{u}\in\mathcal{N}$ that
\begin{equation*}
    \omega(\mathbf{u})\langle
F^{'}(\mathbf{u}),\mathbf{u}\rangle=\langle\Phi^{'}(\mathbf{u}),\mathbf{u}\rangle=F(\mathbf{u})=0.
\end{equation*}
Thus we deduced from $(\ref{negative definite of F})$ that $\omega(\mathbf{u})=0$ and
$\Phi^{'}(\mathbf{u})=\mathbf{0}$.

(iv) This results from a simple calculation by
$(\ref{equiv of N})$.

(v) Assume that $\{\mathbf{u}_n\} \subset\mathcal{N}$ is a $(PS)_c$ sequence
of $\Phi_{\mathcal{N}}$, that is,
\begin{equation*}
\Phi(\mathbf{u}_n)\to c  \quad and
 \quad \nabla_{\mathcal{N}}\Phi(\mathbf{u}_n)\to \mathbf{0}.
\end{equation*}
Then there exist $\omega_n\in\mathbb{R}$ such that
\begin{equation}\label{the derivative of Phi}
    \nabla_{\mathcal{N}}\Phi(\mathbf{u}_n)=\Phi^{'}(\mathbf{u}_n)-\omega_n
F^{'}(\mathbf{u}_n).
\end{equation}
By  $(iv)$ and $\Phi(\mathbf{u}_n)\to c$ we have that
$\|\mathbf{u}_n\|\le C<+\infty$.
Hence,
\begin{equation*}
  \omega_n\langle F^{'}(\mathbf{u}_n),\mathbf{u}_n\rangle=\langle \Phi^{'}(\mathbf{u}_n),\mathbf{u}_n\rangle
-\langle \nabla\Phi(\mathbf{u}_n),\mathbf{u}_n\rangle=o(1).
\end{equation*}
It follows from $(\ref{negative definite of F})$ that
\begin{equation}\label{omega to 0}
    \omega_n\to 0.
\end{equation}

Notice that for all $\mathbf{v}=(v_1,v_2)\in \mathcal{H}$
\begin{eqnarray*}
  \langle F^{'}(\mathbf{u}_n),\mathbf{v}\rangle &=& (2 a_1+4 b_1\|u_{n,1}\|^2) \int_{\Omega}\nabla u_{n,1}\cdot\nabla v_1\,\mathrm{d}x+(2 a_2+4 b_2\|u_{n,2}\|^2)\int_{\Omega}\nabla u_{n,2}\cdot \nabla v_2\,\mathrm{d}x\\
  && -(p+1)\int_{\Omega}(\lambda_1|u_{n,1}|^{p-1}u_{n,1}v_1+\lambda_2|u_{n,2}|^{p-1}u_{n,2}v_2)\,\mathrm{d}x \\
   &&-(p+1)\beta\int_{\Omega}(|u_{n,1}|^{\frac{p-3}{2}}|u_{n,2}|^{\frac{p+1}{2}}u_{n,1}v_1+|u_{n,2}|^{\frac{p-3}{2}}|u_{n,1}|^{\frac{p+1}{2}}u_{n,2}v_2)\,\mathrm{d}x.
\end{eqnarray*}

By H{\"o}lder inequality and Sobolev embedding theorem, we have
\begin{eqnarray*}
  \left|\int_{\Omega}|u_{n,1}|^{p-1}u_{n,1}v_1\,\mathrm{d}x\right| &\le& \int_{\Omega}|u_{n,1}|^{p}|v_1|\,\mathrm{d}x \\
  &\le& \left(\int_{\Omega}|u_{n,1}|^{p+1}\,\mathrm{d}x\right)^{\frac{p}{p+1}}\left(\int_{\Omega}|v_1|^{p+1}\,\mathrm{d}x\right)^{\frac{1}{p+1}}\\
   &\le&C \|u_{n,1}\|^{p}\|v_1\|\\
   &\le&C\|\mathbf{u}_n\|^p\|\mathbf{v}\| ,
   \end{eqnarray*}
   and
   \begin{eqnarray*}
    &&\left|\int_{\Omega}|u_{n,1}|^{\frac{p-3}{2}}|u_{n,2}|^{\frac{p+1}{2}}u_{n,1}v_1\,\mathrm{d}x\right|\\ &\le& \int_{\Omega}|u_{n,1}|^{\frac{p-1}{2}}|u_{n,2}|^{\frac{p+1}{2}}|v_1|\,\mathrm{d}x\\
      &\le& \left(\int_{\Omega}|u_{n,1}|^{p+1}\,\mathrm{d}x\right)^{\frac{p-1}{2(p+1)}}\left(\int_{\Omega}|u_{n,2}|^{p+1}\,\mathrm{d}x\right)^{\frac{1}{2}}\left(\int_{\Omega}|v_1|^{p+1}\,\mathrm{d}x\right)^{\frac{1}{p+1}} \\
      &=& \left|u_{n,1}\right|^{\frac{p-1}{2}}_{p+1}\left|u_{n,2}\right|^{\frac{p+1}{2}}_{p+1}\left|v_1\right|_{p+1} \\
      &\le& C\|u_{n,1}\|^{\frac{p-1}{2}}\|u_{n,2}\|^{\frac{p+1}{2}}\|v_1\| \\
    &=& C\|\mathbf{u}_n\|^p\|\mathbf{v}\|.
   \end{eqnarray*}
Similarly, we get estimates for
\begin{equation*}
   \left|\int_{\Omega}|u_{n,2}|^{p-1}u_{n,2}v_2\,\mathrm{d}x\right|
   \qquad  and \quad
   \left|\int_{\Omega}|u_{n,2}|^{\frac{p-3}{2}}|u_{n,1}|^{\frac{p+1}{2}}u_{n,2}v_2\,\mathrm{d}x\right|.
\end{equation*}

It follows that
$$\|F^{'}(\mathbf{u}_n)\|\le C(\|\mathbf{u}_n\|+\|\mathbf{u}_n\|^3+\|\mathbf{u}_n\|^p),$$
and hence, $\|F^{'}(\mathbf{u}_n)\|\le C<+\infty$ since $\mathbf{u}_n$ is bounded.
 This jointly with $(\ref{the derivative of Phi})$ and $(\ref{omega to 0})$
yields that $\Phi^{'}(\mathbf{u}_n)\to\mathbf{0}$, that is, $\{\mathbf{u}_n\}$ is a $(PS)_c$ sequence of $\Phi$.

Notice that ${\mathbf{u}_n}$ is bounded in $\mathcal{H}$. Passing if necessary to a
subsequence, we can assume that $\mathbf{u}_n\rightharpoonup
\mathbf{u}$ in $\mathcal{H}$ and $\mathbf{u}_n \to \mathbf{u}$ in
$\mathcal{L}^{p+1}(\Omega)$. So
\begin{eqnarray}\label{PScondition}
\nonumber&&(a_1+b_1\|u_{n,1}\|^2)\int_{\Omega}\nabla
u_{n,1}\cdot\nabla(u_{n,1}-u_1)\mathrm{d}\,x
+(a_2+b_2\|u_{n,2}\|^2)\int_{\Omega}\nabla u_{n,2}\cdot\nabla(u_{n,2}-u_2)\mathrm{d}\,x\\
   &=&\langle \Phi^{'}(\mathbf{u}_n),\mathbf{u}_n-\mathbf{u}\rangle
+\int_{\Omega}[\lambda_1 |u_{n,1}|^{p-1}u_{n,1}(u_{n,1}-u_1)+\lambda_2
|u_{n,2}|^{p-1}u_{n,2}(u_{n,2}-u_2)]\;\mathrm{d}\,x\nonumber \\
&&+\beta\int_{\Omega}[|u_{n,1}|^{\frac{p-3}{2}}|u_{n,2}|^{\frac{p+1}{2}}u_{n,1}(u_{n,1}-u_1)+
|u_{n,1}|^{\frac{p+1}{2}}|u_{n,2}|^{\frac{p-3}{2}}u_{n,2}(u_{n,2}-u_2)]\;\mathrm{d}x\nonumber\\
   &\to&0.
\end{eqnarray}
Notice that
\begin{eqnarray*}
  \lim_{n\to\infty}\int_{\Omega}\nabla u_{n,1}\cdot \nabla u_1\quad\mathrm{d}x &=& \int_{\Omega}|\nabla u_1|^2\quad\mathrm{d}x \\
   &\le&  \liminf_{n\to\infty}\int_{\Omega}|\nabla u_{n,1}|^2\quad\mathrm{d}x\\
   &=&\lim_{n\to\infty}\int_{\Omega}|\nabla u_{n,1}|^2\quad\mathrm{d}x,
\end{eqnarray*} by Fatou's Lemma, that is,$$\lim_{n\to\infty}\int_{\Omega}\nabla u_{n,1}\cdot\nabla(u_{n,1}-u_1)\;\mathrm{d}\,x \ge 0.$$
Similarly, we also have that
  $$\lim_{n\to\infty}\int_{\Omega}\nabla u_{n,2}\cdot\nabla(u_{n,2}-u_2)\;\mathrm{d}\,x\ge 0.$$
It follows from  (\ref{PScondition}) that
\begin{eqnarray*}
     (a_1+b_1\|u_{n,1}\|^2)\int_{\Omega}\nabla u_{n,1}\cdot\nabla(u_{n,1}-u_1)\;\mathrm{d}\,x
 &\to& 0,\\
   (a_2+b_2\|u_{n,2}\|^2)\int_{\Omega}\nabla u_{n,2}\cdot\nabla(u_{n,2}-u_2)\;\mathrm{d}\,x&\to& 0.
\end{eqnarray*}
This jointly with the fact that $\|u_{n,i}\|$ is bounded, yields
that $\|u_{n,i}\|\to \|u_{i}\|$, then $u_{n,i}\to u_i$ in $H$ for
$i=1,2$, that is, $\mathbf{u}_n\to \mathbf{u}$ in $\mathcal{H}$.
Moreover, we have that $\mathbf{u}\in\mathcal{N}$  by
$\mathbf{u}_n\in\mathcal{N}$ and
$\langle\Phi^{'}(\mathbf{u}_n),\mathbf{u}_n\rangle\to 0$.
\end{proof}

\begin{rek}
{\emph Lemma \ref{2.1}} implies that
$\inf_{\mathbf{u}\in\mathcal{N}}\Phi(\mathbf{u})$ can be achieved,
giving rise to a nontrivial solution of problem (\ref{1}). One may
suspect that such a solution is a semi-positive one referred in the
Introduction. However, the proof of  {\emph Corollary \ref{cor 1.4}}
show that this can not occur when $\beta>0$ is large enough.
\end{rek}

In order to demonstrate that there exists a nontrivial solution of
problem (\ref{1}) different from semi-positive ones
$\mathbf{u}_1=(U_1,0)$ and $\mathbf{u}_2=(0,U_2)$, we need the
following lemma, which immediately yields that the Morse index of
$\mathbf{u}_j$ is exactly one for $j=1,2$.
\begin{lem}\label{strict local min}
$\mathbf{u}_j$, $j=1,2$, are strict local minima of $\Phi$ on
$\mathcal{N}$.
\end{lem}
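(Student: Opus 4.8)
The plan is to show directly that for $\mathbf{u}=(u_1,u_2)\in\mathcal N$ in a sufficiently small neighbourhood of $\mathbf{u}_1=(U_1,0)$ with $\mathbf{u}\neq\mathbf{u}_1$ one has $\Phi(\mathbf{u})>\Phi(\mathbf{u}_1)$; the case of $\mathbf{u}_2$ is identical after interchanging the indices, since the system, $\mathcal N$ and $\Phi$ are symmetric under swapping the two components. First I would fix scalar notation: let $I_1$ be the restriction of $\Phi$ to $H\times\{0\}$, i.e. $I_1(u)=\frac{a_1}{2}\|u\|^2+\frac{b_1}{4}\|u\|^4-\frac{\lambda_1}{p+1}\int_\Omega|u|^{p+1}\,\mathrm{d}x$, and let $\widetilde{\mathcal N}_1:=\mathcal N\cap(H\times\{0\})=\{u\in H\setminus\{0\}:\ a_1\|u\|^2+b_1\|u\|^4=\lambda_1\int_\Omega|u|^{p+1}\,\mathrm{d}x\}$, so that $U_1\in\widetilde{\mathcal N}_1$ attains $m_1:=\inf_{\widetilde{\mathcal N}_1}I_1$ and $\Phi(\mathbf{u}_1)=I_1(U_1)=m_1$. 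Put $G_i(r)=\frac{p-1}{2(p+1)}a_ir^2+\frac{p-3}{4(p+1)}b_ir^4$, which is strictly increasing on $[0,\infty)$ because $p>3$; by Lemma \ref{2.1}(iv) one has $\Phi(\mathbf{u})=G_1(\|u_1\|)+G_2(\|u_2\|)$ for every $\mathbf{u}\in\mathcal N$, and in particular $\Phi(\mathbf{u}_1)=G_1(\|U_1\|)$.

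The core of the argument is a projection estimate onto $\widetilde{\mathcal N}_1$. Since $\|U_1\|>0$, every $\mathbf{u}\in\mathcal N$ near $\mathbf{u}_1$ has $u_1\neq0$, so — exactly as in the proof of Lemma \ref{2.1}(i) — there is a unique $\tau=\tau(u_1)>0$ with $(\tau u_1,0)\in\mathcal N$, namely the unique positive zero of $f_1(t)=\lambda_1\big(\int_\Omega|u_1|^{p+1}\,\mathrm{d}x\big)t^{p-1}-b_1\|u_1\|^4t^2-a_1\|u_1\|^2$, and $f_1<0$ on $(0,\tau)$, $f_1>0$ on $(\tau,+\infty)$. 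I would then show $\tau\le 1$. Using $\mathbf{u}\in\mathcal N$ to eliminate $\lambda_1\int_\Omega|u_1|^{p+1}\,\mathrm{d}x$, a short computation gives $f_1(1)=a_2\|u_2\|^2+b_2\|u_2\|^4-\lambda_2\int_\Omega|u_2|^{p+1}\,\mathrm{d}x-2\beta\int_\Omega|u_1|^{\frac{p+1}{2}}|u_2|^{\frac{p+1}{2}}\,\mathrm{d}x$; by Hölder and Sobolev, and since $p>3$ makes each of the exponents $4$, $p+1$, $\frac{p+1}{2}$ strictly larger than $2$, the last three terms are bounded by $C(\|u_2\|^4+\|u_2\|^{p+1}+\|u_2\|^{(p+1)/2})=o(\|u_2\|^2)$, so $f_1(1)\ge\|u_2\|^2(a_2-o(1))\ge0$ (and $>0$ unless $u_2=0$) once $\mathbf{u}$ is close enough to $\mathbf{u}_1$. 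As $f_1<0$ on $(0,\tau)$, this forces $\tau\le1$.

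Now $\tau\le1$ and the monotonicity of $G_1$, together with Lemma \ref{2.1}(iv) applied both to $\mathbf{u}$ and to $(\tau u_1,0)\in\mathcal N$, give
\begin{align*}
\Phi(\mathbf{u})&=G_1(\|u_1\|)+G_2(\|u_2\|)\ \ge\ G_1(\tau\|u_1\|)+G_2(\|u_2\|)\\
&=I_1(\tau u_1)+G_2(\|u_2\|)\ \ge\ m_1+G_2(\|u_2\|)=\Phi(\mathbf{u}_1)+G_2(\|u_2\|),
\end{align*}
where I used $\tau u_1\in\widetilde{\mathcal N}_1$ and $I_1\ge m_1$ there. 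Hence $\Phi(\mathbf{u})\ge\Phi(\mathbf{u}_1)$, and if equality holds then every step above is an equality: $G_2(\|u_2\|)=0$ forces $u_2=0$; then $\tau=1$ and $I_1(u_1)=m_1$, so $u_1$ is a minimizer of $I_1$ on $\widetilde{\mathcal N}_1$ lying near $U_1$. The last ingredient is that $U_1$ is a \emph{strict} local minimizer of $I_1$ on $\widetilde{\mathcal N}_1$: if $u_n\in\widetilde{\mathcal N}_1$, $u_n\neq U_1$, $u_n\to U_1$ and $I_1(u_n)=m_1$, then each $u_n$ is a critical point of $I_1|_{\widetilde{\mathcal N}_1}$, hence — by the scalar analogue of Lemma \ref{2.1}(iii) — a nontrivial solution of $(\ref{2})$ with $f(x,u)=\lambda_1|u|^{p-1}u$; since $|u_n|\in\widetilde{\mathcal N}_1$ has the same energy it is also such a solution, and by elliptic regularity and the strong maximum principle it is positive, so by the uniqueness statement recalled in Remark \ref{rem 1.1} we get $|u_n|=U_1$, i.e. $u_n=\pm U_1$, contradicting $u_n\to U_1$ with $u_n\neq U_1$. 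Therefore $u_1=U_1$, $\mathbf{u}=\mathbf{u}_1$, and $\mathbf{u}_1$ is a strict local minimum of $\Phi$ on $\mathcal N$.

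I expect two points to be the delicate ones. The structural mechanism — and the reason only $p>3$ (not $p\ge1$) works — is that near $\mathbf{u}_1$ the component $u_2$ and the coupling term enter the Nehari relation only at order $\|u_2\|^{(p+1)/2}=o(\|u_2\|^2)$, which is what produces $\tau(u_1)\le1$ and then, via Lemma \ref{2.1}(iv), pushes $\Phi(\mathbf{u})$ strictly above $\Phi(\mathbf{u}_1)$; for $p=3$ the coupling becomes quadratic in $u_2$ and the whole estimate collapses. The genuinely non-trivial input, and the main obstacle, is establishing the \emph{strict} local minimality of $U_1$ for the scalar problem: this is precisely where uniqueness of the positive solution of $(\ref{2})$ — hence the standing hypothesis that $N=1$ or $\Omega$ is radial — must be used. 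Without it one would only obtain that $\mathbf{u}_1$ is a (non-strict) local minimum of $\Phi$ on $\mathcal N$, which is still enough to bound its Morse index by one but not to separate it from the other critical points used later.
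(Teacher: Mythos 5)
Your argument is correct, but it proceeds quite differently from the paper's. The paper works at second order: it asserts that the constrained Hessian of the scalar functional satisfies $I_j''(U_j)[h]^2\ge c_j\|h\|^2$ on $T_{U_j}\mathcal{N}_j$ (this is where minimality of $U_j$ and, implicitly, the non-degeneracy coming from the uniqueness hypothesis enter), and then computes the full constrained Hessian at $\mathbf{u}_1$ as the block sum $I''(U_1)[h_1]^2+a_2\|h_2\|^2$ --- the cross terms and the second derivative of the coupling term in the $u_2$-direction vanish at $u_2=0$ precisely because $\tfrac{p+1}{2}>2$. Your proof replaces this Hessian computation by a zeroth-order comparison: the scaling $\tau(u_1)\le 1$ onto $\widetilde{\mathcal N}_1$, the monotone representation of $\Phi|_{\mathcal N}$ from Lemma \ref{2.1}(iv), and the global minimality of $U_1$ on $\widetilde{\mathcal N}_1$; the same structural fact ($\tfrac{p+1}{2}>2$) appears in your estimate $f_1(1)\ge a_2\|u_2\|^2-o(\|u_2\|^2)$. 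Each approach has something to recommend it. The paper's version delivers the quantitative coercivity $D^2_{\mathcal N}\Phi(\mathbf{u}_1)[\mathbf{h}]^2\ge c\|\mathbf{h}\|^2$ directly, which is the cleanest way to read off that the Morse index of $\mathbf{u}_j$ is exactly one; but the step $I_j''(U_j)[h]^2\ge c_j\|h\|^2$ is stated without proof and is genuinely more than minimality gives (it is non-degeneracy of $U_j$). Your version needs only the uniqueness of the positive solution of the scalar problem (exactly what Remark \ref{rem 1.1} supplies) and no non-degeneracy, at the cost of a slightly longer equality analysis; it still yields that the constrained Hessian is positive semi-definite, hence the same Morse index conclusion. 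Two small points worth making explicit if you write this up: that $U_1$ attains $\inf_{\widetilde{\mathcal N}_1}I_1$ with the $|u|^{p+1}$ normalization (pass through $|u|$, which lies in both Nehari sets with the same energy), and that $\Omega$ is connected so that $|u_n|=U_1>0$ forces $u_n=\pm U_1$.
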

\begin{proof}
For $j=1,2,$ as referred in the Introduction, $U_j$ achieves
$$\inf_{u\in\mathcal{N}_j}I_j(u)$$ with
\begin{eqnarray*}
      I_j(u) &= &\frac{a_j}{2}\| u\|^2 + \frac{b_j}{4}\|u\|^4-
\frac{\lambda_j}{p+1}|u^+|^{p+1}_{p+1}\qquad \quad and \\
      \mathcal{N}_j &=&\{u\in H\setminus\{0\}:a_j\| u\|^2 +
b_j\| u\|^4=\lambda_j|u^+|^{p+1}_{p+1}\}.
      \end{eqnarray*}
Hence, by $\Phi^{'}(\mathbf{u}_j)=0$ we get that
\begin{equation*}
    D^2_{\mathcal{N}_j}I_j(U_j)[h]^2=I^{''}_j(U_j)[h]^2\ge c_j \|h\|^2,\qquad\forall h\in T_{U_j}\mathcal{N}_j,
\end{equation*}
where $D^2_{\mathcal{N}_j}I_j(U_j)$ denotes the second derivative of $\Phi$ constrained on $\mathcal{N}$.
 For any $\mathbf{h}=(h_1,h_2)\in
T_{\mathbf{u}_1}\mathcal{N}$, we have that $h_1\in
T_{U_1}\mathcal{N}_1$ by a simple calculation. Therefore
\begin{eqnarray*}
  D^2_{\mathcal{N}}\Phi(\mathbf{u}_1)[\mathbf{h}]^2 &=&\Phi^{''}(\mathbf{u}_1)[\mathbf{h}]^2 \\
   &=& I^{''}(U_1)[h_1]^2 +a_2\|h_2\|^2\\
   &\ge& c_1\|h_1\|^2+a_2\|h_2\|^2,
\end{eqnarray*}
which yields that $\mathbf{u}_1$ is a strictly local minimum of
$\Phi$ on $\mathcal{N}$. Similarly, we can prove that $\mathbf{u}_2$ is a
strictly local minimum of $\Phi$ on $\mathcal{N}$.
\end{proof}

{\textbf{\emph{Proof of Theorem \ref{thm1}}}}

Lemma \ref{2.1} and Lemma \ref{strict local min} imply that
$\Phi_{\mathcal{N}}$ satisfies the hypotheses of the Mountain Pass
Lemma (\cite{Ambrosetti-Rabinowitz73}). Hence the functional $\Phi$
has a mountain-pass point $\mathbf{u}^*$ on $\mathcal{N}$ such that
$\Phi(\mathbf{u}^*)> max\{\Phi(\mathbf{u}_1),\Phi(\mathbf{u}_2)\}$.

We claim that the Morse index of $\mathbf{u}^{*}$ is at least two.
Indeed, it follows from  $(\ref{negative definite of F})$ that
$$\langle\Phi^{''}(\mathbf{u})\mathbf{u},\mathbf{u}\rangle
<0,\; \forall \mathbf{u}\in\mathcal{N},$$
which implies that the Morse index of any critical point of $\Phi$
equals its Morse index as constrained critical point of $\Phi_{\mathcal{N}}$, increased by 1.
Hence, the Morse index of $\mathbf{u}$  is at least two.

 In order to prove Theorem \ref{thm1}, it is enough to ensure that
 $\mathbf{u}^*>0$. We discuss it as follows.

 If $\beta>0,$ we introduce the functional
\begin{eqnarray*}
  \Phi^+(\mathbf{u}) &=& \frac{1}{2}(a_1\|u_1\|^2+a_2\|u_2\|^2)+
    \frac{1}{4}(b_1\|u_1\|^4+b_2\|u_2\|^4) \\
   &&-\frac{1}{p+1}\int_{\Omega}(\lambda_1|u_1^+|^{p+1}+\lambda_2|u_2^+|^{p+1}
    +2\beta|u_1^+|^{\frac{p+1}{2}}|u_2^+|^{\frac{p+1}{2}})\,\mathrm{d}x,
\end{eqnarray*}
where $u^+=\max\{u,0\}$ and the corresponding Nehari manifold
\begin{equation*}
    \mathcal{N}^+\stackrel{\mathrm{def}}{=}\{\;\mathbf{u}\in\mathcal{H}\setminus\{\mathbf{0}\}:
    \quad\langle\nabla\Phi^+(\mathbf{u}),\mathbf{u}\rangle=0\}.
\end{equation*}

 Since $3<p<2^*-1$, $\Phi^+\in\mathcal{C}^2(\mathcal{H},\mathbb{R})$.
 Hence we can repeat the arguments as above and get a mountain-pass
 critical point $\mathbf{u}^*$ of $\Phi^+$ on $\mathcal{N}^+$, which gives rise to
 a solution of
 \begin{equation}\label{positive system}
 \begin{cases}
 -(a_1+b_1\int_{\Omega} |\nabla u_1|^2\,\mathrm{d}x)\Delta u_1\ =
  \lambda_1 (u_1^+)^p+\beta (u_1^+)^{\frac{p-1}{2}} (u_2^+)^{\frac{p+1}{2}} \qquad in \ \Omega\\
-(a_2+b_2\int_{\Omega}|\nabla u_2|^2\,\mathrm{d}x)\Delta u_2\ =
\lambda_2 (u_2^+)^p+\beta (u_1^+)^{\frac{p+1}{2}} (u_2^+)^{\frac{p-1}{2}} \qquad in \ \Omega\\
u_1=u_2=0 \quad\, on\ \partial\Omega.
\end{cases}
\end{equation}
 It is clear to see that $\mathbf{u}^*\ge 0$. The fact that
 $\mathbf{u}^*\in\mathcal{N}^+$ implies $\mathbf{u}^*\neq \mathbf{0}$.
 By the uniqueness of problem (\ref{2}) with $f(x,u)=\lambda(u^+)^p$ (see Remark \ref{rem 1.1}), we get that $u_1^*\not\equiv
 0$ and $u_2^*\not\equiv 0$. Applying the maximum principle to each
 equation in (\ref{positive system}), we get that $u_1^*> 0$ and $u_2^*>
 0$. This completes the proof of Theorem \ref{thm1} when $\beta>0$.

 If $-\sqrt{\lambda_1\lambda_2}<\beta\le 0$, we consider the following
 functional
\begin{eqnarray*}
   \tilde\Phi(\mathbf{u})&=& \frac{1}{2}(a_1\|u_1\|^2+a_2\|u_2\|^2)+
    \frac{1}{4}(b_1\|u_1\|^4+b_2\|u_2\|^4) \\
   && -\frac{1}{p+1}\int_{\Omega}(\lambda_1|u_1^+|^{p+1}+\lambda_2|u_2^+|^{p+1}
    +2\beta|u_1|^{\frac{p+1}{2}}|u_2|^{\frac{p+1}{2}})\,\mathrm{d}x
\end{eqnarray*}
and the Nehari manifold
\begin{equation*}
    \tilde{\mathcal{N}}\stackrel{\mathrm{def}}{=}\{\;\mathbf{u}\in\mathcal{H}\setminus\{\mathbf{0}\}:
    \quad\langle\nabla\tilde{\Phi}(\mathbf{u}),\mathbf{u}\rangle=0\}.
\end{equation*}
Similar arguments imply that $\tilde\Phi$ has a mountain-pass
critical point $\mathbf{u}^*$ of $\tilde\Phi$ on
$\tilde{\mathcal{N}}$, which give rise to a solution of
 \begin{equation*}
 \begin{cases}
 -(a_1+b_1\int_{\Omega} |\nabla u_1|^2\,\mathrm{d}x)\Delta u_1\ -
 \beta |u_1|^{\tfrac{p-3}{2}} |u_2|^{\tfrac{p+1}{2}}u_1=\lambda_1 (u_1^+)^p \qquad in \ \Omega,\\
-(a_2+b_2\int_{\Omega} |\nabla u_2|^2\,\mathrm{d}x)\Delta u_2\ -
\beta |u_1|^{\tfrac{p+1}{2}} |u_2|^{\tfrac{p-3}{2}}u_2=
\lambda_2 (u_2^+)^p \qquad in \ \Omega,\\
u_1=u_2=0 \quad\, on\ \partial\Omega.
\end{cases}
\end{equation*}
Multiplying these equations with $u_1^{-}$ resp. $u_2^-$ and
integrating, we get
\begin{eqnarray}\label{two equ for nonnegative}
(a_1+b_1\|u_1\|^2)\int_{\Omega} |\nabla u_1^-|^2\,\mathrm{d}x-
\beta \int_{\Omega}|u_1^-|^{\tfrac{p+1}{2}}
|u_2|^{\tfrac{p+1}{2}}\,\mathrm{d}x&=&0,\\
(a_2+b_2\|u_2\|^2)\int_{\Omega} |\nabla
u_2^-|^2\,\mathrm{d}x-\beta\int_{\Omega} |u_1|^{\tfrac{p+1}{2}}
|u_2^-|^{\tfrac{p+1}{2}}\,\mathrm{d}x&=&0.\label{two equ for nonnegative 2}
\end{eqnarray}
 Since
$-\sqrt{\lambda_1\lambda_2}<\beta\le0$, we conclude that $u_1^*\ge
0$ and $u_2^*\ge
 0$. By the uniqueness of problem (\ref{2}) with $f(x,u)=\lambda(u^+)^p$ (see Remark \ref{rem 1.1}), $u_1^*\not\equiv 0$ and
 $u_2^*\not\equiv 0$. Hence $u_1^*> 0$ and $u_2^*>
 0$ in $\Omega$ by the strong maximum principle.

 This completes the proof of Theorem \ref{thm1}.
 \hspace{\stretch{1}}$\Box$

\section{The proof of Theorem \ref{thm2}}
Assume that the hypotheses in Theorem $\ref{thm2}$ hold  throughout the section, which lead us to consider the
following nonlocal problem
 \begin{equation}\label{fully sym equation}
 \begin{cases}
 -(a+b\int_{\Omega} |\nabla u_1|^2\,\mathrm{d}x)\Delta u_1\ -
 \beta |u_1|^{\tfrac{p-3}{2}} |u_2|^{\tfrac{p+1}{2}}u_1=\lambda(u_1^+)^p \qquad in \ \Omega,\\
-(a+b\int_{\Omega} |\nabla u_2|^2\,\mathrm{d}x)\Delta u_2\ - \beta
|u_1|^{\tfrac{p+1}{2}} |u_2|^{\tfrac{p-3}{2}}u_2=\lambda
(u_2^+)^p \qquad in \ \Omega,\\
u_1=u_2=0 \quad\, on\ \partial\Omega,
\end{cases}
\end{equation}
where $a,b,\lambda$ are positive constants, $\beta\le-\lambda$
and $p\in(3,2^{*}-1).$

The energy functional associated with (\ref{fully sym equation}) ( also denoted by $\Phi$) is
\begin{eqnarray*}
    \Phi(\mathbf{u})&=&\frac{a}{2}(\|u_1\|^2+\|u_2\|^2)+
    \frac{b}{4}(\|u_1\|^4+\|u_2\|^4)\\
    &&-\frac{1}{p+1}\int_{\Omega}\left(\lambda|u_1^+|^{p+1}+\lambda|u_2^+|^{p+1}
    +2\beta|u_1|^{\frac{p+1}{2}}|u_2|^{\frac{p+1}{2}}\right)\,\mathrm{d}x,
\end{eqnarray*}
and clearly, $\Phi\in\mathcal{C}^2(\mathcal{H},\mathbb{R}).$
Then, we put
$$
\mathcal{M}\stackrel{\mathrm{def}}{=}\left\{(u_1,u_2)\in\mathcal{H}\left|\begin{array}{cc}
u_1\not\equiv 0,&
a\|u_1\|^2+b\|u_1\|^4-\beta\int_{\Omega}|u_1|^{\frac{p+1}{2}}|u_2|^{\frac{p+1}{2}}\,\mathrm{d}x
=\lambda|u_1^+|_{p+1}^{p+1}\\
u_2\not\equiv 0,&a\|u_2\|^2+b\|u_2\|^4-\beta\int_{\Omega}|u_1|^{\frac{p+1}{2}}|u_2|^{\frac{p+1}{2}}\,\mathrm{d}x
=\lambda|u_2^+|_{p+1}^{p+1}\end{array}\right.
\right\},
$$ which possesses the following properties:
\begin{lem}\label{M}
$(i)$ There exists $\rho>0$ such that $\|\mathbf{u}\|\ge \rho$ for
any
 $\mathbf{u}\in\mathcal{M}$.

$(ii)$ $\mathcal{M}$ is a $\mathcal{C}^2$ complete manifold of
codimension two in $\mathcal{H}$.

$(iii)$ If $\mathbf{u}$ is a critical point of the restriction
$\Phi_{\mathcal{M}}$ of $\Phi$ to $\mathcal{M}$, then $\mathbf{u}$
is a nontrivial critical point of $\Phi$.

$(iv)$ $\Phi(\mathbf{u})=\frac{a(p-1)}{2(p+1)}(\|u_1\|^2+\|u_2\|^2)$
$+
$$\frac{b(p-3)}{4(p+1)}(\|u_1\|^4+\|u_2\|^4)$
, for $\mathbf{u}\in \mathcal{M}$.

 $(v)$ \label{PS}$\Phi_{\mathcal{M}}$ satisfies Palais-Smale condition.
\end{lem}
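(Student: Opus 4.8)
The plan is to mirror the argument already carried out for $\mathcal{N}$ in Lemma \ref{2.1}, keeping track of the extra structure coming from the fact that $\mathcal{M}$ is cut out by \emph{two} constraints rather than one. Write $F_1(\mathbf{u}) = a\|u_1\|^2 + b\|u_1\|^4 - \beta\int_\Omega |u_1|^{\frac{p+1}{2}}|u_2|^{\frac{p+1}{2}}\,\mathrm{d}x - \lambda|u_1^+|_{p+1}^{p+1}$ and $F_2(\mathbf{u})$ analogously with the roles of $u_1,u_2$ swapped in the two local terms; then $\mathcal{M} = \{\mathbf{u}\in\mathcal{H}: u_1\not\equiv 0,\ u_2\not\equiv 0,\ F_1(\mathbf{u})=F_2(\mathbf{u})=0\}$. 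For (i): if $\mathbf{u}\in\mathcal{M}$, add the two defining equations and use $\beta\le -\lambda<0$ so that $-2\beta\int_\Omega|u_1|^{\frac{p+1}{2}}|u_2|^{\frac{p+1}{2}}\ge 0$; dropping it gives $a(\|u_1\|^2+\|u_2\|^2) \le a(\|u_1\|^2+\|u_2\|^2) + b(\|u_1\|^4+\|u_2\|^4) \le \lambda(|u_1^+|_{p+1}^{p+1}+|u_2^+|_{p+1}^{p+1}) \le C(\|u_1\|^{p+1}+\|u_2\|^{p+1})$ by Sobolev embedding, and since $p+1>2$ this forces $\|\mathbf{u}\|\ge\rho$ for some $\rho>0$.

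For (ii) and (iii): compute the $2\times 2$ matrix $\big(\langle F_i'(\mathbf{u}), u_j \mathbf{e}_j\rangle\big)_{i,j}$ where $u_j\mathbf{e}_j$ denotes the vector with $j$-th component $u_j$ and the other zero — i.e.\ test $F_i'$ against the two ``partial radial'' directions $(u_1,0)$ and $(0,u_2)$. Each diagonal entry is of the form $2a\|u_i\|^2 + 4b\|u_i\|^4 - \frac{p+1}{2}\cdot 2\beta\int_\Omega(\cdots) - \lambda(p+1)|u_i^+|_{p+1}^{p+1}$; using the constraint $F_i(\mathbf{u})=0$ to eliminate the $\lambda$-term (as in the passage from $\langle F'(\mathbf{u}),\mathbf{u}\rangle$ to (\ref{negative definite of F})) one sees the diagonal entries are $(2-(p+1))a\|u_i\|^2 + (4-(p+1))b\|u_i\|^4 + (\text{sign-definite }\beta\text{-term}) <0$ since $p>3$ and $\beta<0$; the off-diagonal entries $\langle F_1'(\mathbf{u}),(0,u_2)\rangle = \langle F_2'(\mathbf{u}),(u_1,0)\rangle = -\frac{p+1}{2}\cdot 2\beta\int_\Omega|u_1|^{\frac{p+1}{2}}|u_2|^{\frac{p+1}{2}}\,\mathrm{d}x \ge 0$ because $\beta<0$. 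A matrix with strictly negative diagonal and nonnegative off-diagonal, evaluated at a point where both $\|u_i\|\ge\rho>0$ (part (i)), is invertible — indeed it is strictly diagonally dominant once one checks that the magnitude of each diagonal entry, which contains the term $(p-1)a\|u_i\|^2$ with no counterpart off the diagonal, dominates the off-diagonal term; more robustly one observes the matrix is the negative of an $M$-matrix-like object and its determinant is strictly positive. Hence $(F_1,F_2)$ is a submersion near $\mathcal{M}$, so by the Implicit Function Theorem $\mathcal{M}$ is a $\mathcal{C}^2$ manifold of codimension two (the $\mathcal{C}^2$ regularity is inherited from $\Phi\in\mathcal{C}^2$, which needs $p+1\ge 2$, valid here); completeness follows since a $\mathcal{H}$-limit of points in $\mathcal{M}$ still satisfies $F_1=F_2=0$ by continuity and cannot have $u_i\equiv 0$ by part (i). For (iii): a critical point of $\Phi_{\mathcal{M}}$ satisfies $\Phi'(\mathbf{u}) = \mu_1 F_1'(\mathbf{u}) + \mu_2 F_2'(\mathbf{u})$ for Lagrange multipliers $\mu_1,\mu_2$; testing against $(u_1,0)$ and $(0,u_2)$ and using $\langle\Phi'(\mathbf{u}),(u_i,0)\rangle = F_i(\mathbf{u}) + (\text{lower-order from the other component}) $ — more precisely, using that $\langle\Phi'(\mathbf u),(u_1,0)\rangle=0$ is exactly $F_1(\mathbf u)=0$ and likewise for the second component, so the left-hand sides vanish on $\mathcal M$ — gives the linear system $\big(\langle F_i'(\mathbf{u}),(u_j,0)\text{ or }(0,u_j)\rangle\big)(\mu_1,\mu_2)^T = 0$, whose matrix we just showed is invertible; hence $\mu_1=\mu_2=0$ and $\Phi'(\mathbf{u})=\mathbf{0}$.

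Part (iv) is a direct computation: on $\mathcal{M}$ one has $\lambda|u_i^+|_{p+1}^{p+1} = a\|u_i\|^2 + b\|u_i\|^4 - \beta\int_\Omega|u_1|^{\frac{p+1}{2}}|u_2|^{\frac{p+1}{2}}$ for each $i$; summing and substituting into $\Phi(\mathbf{u})$ (noting the two $\beta$-integrals combine with the $2\beta$ term in $\Phi$ to cancel) leaves precisely $\frac{a(p-1)}{2(p+1)}(\|u_1\|^2+\|u_2\|^2) + \frac{b(p-3)}{4(p+1)}(\|u_1\|^4+\|u_2\|^4)$. Part (v): from (iv) and boundedness of $\Phi$ along a $(PS)_c$ sequence one gets $\|\mathbf{u}_n\|$ bounded (using $p>3$ so both coefficients are positive); then the argument that the Lagrange multipliers $\mu_{1,n},\mu_{2,n}\to 0$ runs as in Lemma \ref{2.1}(v) — test $\nabla_{\mathcal{M}}\Phi(\mathbf{u}_n)$ against the two partial-radial directions, use that the $2\times 2$ coefficient matrix is uniformly invertible on $\{\|\mathbf{u}\|\le C,\ \|u_i\|\ge\rho\}$ — so $\Phi'(\mathbf{u}_n)\to\mathbf{0}$, i.e.\ $\{\mathbf{u}_n\}$ is $(PS)_c$ for $\Phi$; finally pass to a weakly convergent subsequence, use compact embedding $\mathcal{H}\hookrightarrow\mathcal{L}^{p+1}(\Omega)$ (subcritical since $p<2^*-1$) and the monotonicity/Fatou trick from Lemma \ref{2.1}(v) applied to each component to upgrade weak to strong convergence. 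The main obstacle is the bookkeeping in (ii)–(iii): verifying cleanly that the $2\times 2$ Jacobian of the two constraints in the two natural test directions is invertible (equivalently, that no nonzero Lagrange multiplier pair can occur), and doing so with a bound uniform on sublevel sets so that (v) goes through; everything else is a routine transcription of the single-constraint argument already in Lemma \ref{2.1}.
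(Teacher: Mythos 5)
Your proposal follows essentially the same route as the paper: cut $\mathcal{M}$ out by the two functionals $F_1,F_2$, show the $2\times 2$ matrix $\bigl(\partial_{u_j}F_i(\mathbf{u})\,u_j\bigr)$ has strictly negative diagonal and nonnegative off-diagonal entries dominated by the diagonal (the paper phrases this as negative definiteness of the symmetric matrix, which is the same fact), then use the Implicit Function Theorem for (ii), kill the two Lagrange multipliers for (iii), substitute the constraints for (iv), and run the standard Palais--Smale argument with the matrix passed to the limit for (v).

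There is one concrete slip you should repair. In (i) you \emph{sum} the two constraint equations before dropping the nonnegative $-2\beta\int|u_1|^{\frac{p+1}{2}}|u_2|^{\frac{p+1}{2}}$ term, which only yields $\|\mathbf{u}\|\ge\rho$. But every later step where you invoke (i) actually needs the componentwise bound $\|u_1\|\ge\rho$ \emph{and} $\|u_2\|\ge\rho$: the completeness claim in (ii) (a limit point must not have a vanishing component), and above all the uniform invertibility of the matrix along a $(PS)_c$ sequence in (v). If, say, $\|u_{n,1}\|\to 0$ were allowed, then $M^{(1)}_n$ and $M^{(2)}_n$ would both tend to $0$ and the limit matrix would be singular, so you could not conclude $\omega_{n,1}\to 0$. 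The fix is exactly what the paper does: apply your argument to each constraint equation separately, i.e.\ $a\|u_i\|^2\le a\|u_i\|^2+b\|u_i\|^4-\beta\int|u_1|^{\frac{p+1}{2}}|u_2|^{\frac{p+1}{2}}=\lambda|u_i^+|_{p+1}^{p+1}\le \lambda C\|u_i\|^{p+1}$ for $i=1,2$, which gives $\|u_i\|\ge\rho$ for each component since $p+1>2$. With that one-line change the rest of your argument goes through as written.
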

\begin{proof}
(i)By Sobolev embedding theorem, for all $\mathbf{u}=(u_1,u_2)\in\mathcal{M}$ we have that
$$a\|u_i\|^2\le\lambda|u_i^{+}|^{p+1}_{p+1}\le\lambda C\|u_i^{+}\|^{p+1}$$ for $i=1,2$, and hence,
$$\|\mathbf{u}\|\ge\max\{\|u_1\|,\|u_2\|\}\ge\rho:=(\frac{a}{\lambda C})^{\frac{1}{p+1}},$$
for all $\mathbf{u}\in\mathcal{M}$.

(ii)We define $F:\mathcal{H}\to \mathbb{R}^2$ by
\begin{equation*}
    F(\mathbf{u})=\left(\begin{array}{c}
    F_1(\mathbf{u})\\F_2(\mathbf{u})
    \end{array}\right)
    =\left(
    \begin{array}{c}
    a\|u_1\|^2+b\|u_1\|^4-\beta\int_{\Omega}|u_1|^{\frac{p+1}{2}}|u_2|^{\frac{p+1}{2}}\,\mathrm{d}x
-\lambda|u_1^+|_{p+1}^{p+1}\\
a\|u_2\|^2+b\|u_2\|^4-\beta\int_{\Omega}|u_1|^{\frac{p+1}{2}}|u_2|^{\frac{p+1}{2}}\,\mathrm{d}x
-\lambda|u_2^+|_{p+1}^{p+1}
    \end{array}
    \right).
\end{equation*}
Then $F\in \mathcal{C}^2$, and
\begin{equation*}
    \mathcal{M}=\left\{
    \mathbf{u}\in\mathcal{H}\left|
    u_1,u_2\not\equiv 0, F(\mathbf{u})=0
    \right.
    \right\}.
\end{equation*}

We shall prove that for all $\mathbf{u}=(u_1,u_2)\in\mathcal{M}$,\,
$F^{'}(\mathbf{u})(u_1,0)$ and $F^{'}(\mathbf{u})(0,u_2)$ are linearly independent in $\mathbb{R}^2$.
It is enough for us to prove that the matrix
\begin{equation*}
    \left(
    \begin{array}{cc}
    \partial_{u_1}F_1(\mathbf{u})u_1&\partial_{u_2}F_1(\mathbf{u})u_2\\
        \partial_{u_1}F_2(\mathbf{u})u_1&\partial_{u_2}F_2(\mathbf{u})u_2
    \end{array}
    \right)
\end{equation*}
is negative definite.
In fact, for $\mathbf{u}\in\mathcal{M}$ we have
\begin{eqnarray*}
    \partial_{u_1}F_1(\mathbf{u})u_1&=&2a\|u_1\|^2+4b\|u_1\|^4-\frac{p+1}{2}\beta\int_{\Omega}|u_1|^{\frac{p+1}{2}}|u_2|^{\frac{p+1}{2}}\,\mathrm{d}x
    -(p+1)\lambda|u_1^{+}|^{p+1}_{p+1}\\
    &=&(1-p)a\|u_1\|^2+(3-p)b\|u_1\|^4+\frac{p+1}{2}\beta\int_{\Omega}|u_1|^{\frac{p+1}{2}}|u_2|^{\frac{p+1}{2}}\,\mathrm{d}x<0,\\
\partial_{u_2}F_2(\mathbf{u})u_2&=&(1-p)a\|u_2\|^2+(3-p)b\|u_2\|^4+\frac{p+1}{2}\beta\int_{\Omega}|u_1|^{\frac{p+1}{2}}|u_2|^{\frac{p+1}{2}}\,\mathrm{d}x<0.
\end{eqnarray*}
Notice that
\begin{eqnarray*}
&&-\partial_{u_1}F_1(\mathbf{u})u_1>-\frac{p+1}{2}\beta\int_{\Omega}|u_1|^{\frac{p+1}{2}}|u_2|^{\frac{p+1}{2}}\,\mathrm{d}x=\partial_{u_2}F_1(\mathbf{u})u_2,\\ &&-\partial_{u_2}F_2(\mathbf{u})u_2>-\frac{p+1}{2}\beta\int_{\Omega}|u_1|^{\frac{p+1}{2}}|u_2|^{\frac{p+1}{2}}\,\mathrm{d}x=\partial_{u_1}F_2(\mathbf{u})u_1,\\
&&and \quad\partial_{u_1}F_1(\mathbf{u})u_1,\;\partial_{u_2}F_2(\mathbf{u})u_2<0.
\end{eqnarray*}
It follows that the matrix
\begin{equation*}
    \left(
    \begin{array}{cc}
    \partial_{u_1}F_1(\mathbf{u})u_1&\partial_{u_2}F_1(\mathbf{u})u_2\\
        \partial_{u_1}F_2(\mathbf{u})u_1&\partial_{u_2}F_2(\mathbf{u})u_2
    \end{array}
    \right)
\end{equation*}
is negative definite. Consequently, $F^{'}(\mathbf{u})(u_1,0)$ and
$F^{'}(\mathbf{u})(0,u_2)$ are linearly independent in
$\mathbb{R}^2$, and hence
$F^{'}(\mathbf{u}):\mathcal{H}\to\mathbb{R}^2$ is surjective at
every point $\mathbf{u}\in\mathcal{M}$. This jointly with Implicit
Function Theorem implies that $\mathcal{M}$ is a $\mathcal{C}^2$
complete manifold of codimension two in $\mathcal{H}$.

(iii)
If $\mathbf{u}$ is a critical point of $\Phi_{\mathcal{M}}$,
then there exist $\omega_1=\omega_1(\mathbf{u})$ and $\omega_2=\omega_2(\mathbf{u})$ in $\mathbb{R}$ such that
\begin{equation}\label{the equ in lemma3.1}
    \Phi^{'}(\mathbf{u})=\omega_1F_1^{'}(\mathbf{u})+\omega_2 F^{'}_2(\mathbf{u}).
\end{equation}
Multiplying this with $(u_1,0)$ and $(0,u_2)$, respectively, we get
\begin{equation*}
    \left(
    \begin{array}{cc}
    \partial_{u_1}F_1(\mathbf{u})u_1&\partial_{u_2}F_1(\mathbf{u})u_2\\
        \partial_{u_1}F_2(\mathbf{u})u_1&\partial_{u_2}F_2(\mathbf{u})u_2
    \end{array}
    \right)
    \left(
    \begin{array}{c}
    \omega_1\\
      \omega_2
    \end{array}
    \right)
    =
    \left(
    \begin{array}{c}
    0\\
    0
    \end{array}
    \right).
\end{equation*}
It follows from the matrix is negative definite that $\omega_1=\omega_2=0$,
and therefore, $\Phi^{'}(\mathbf{u})=0$ by (\ref{the equ in lemma3.1}).

(iv)
This results from a simple calculation by the definition of
$\mathcal{M}.$

(v)
 Assume that $\{\mathbf{u}_n\} \subset\mathcal{M}$ is a $(PS)_c$ sequence
of $\Phi_{\mathcal{M}}$, that is,
\begin{equation*}
\Phi(\mathbf{u}_n)\to c  \quad and
 \quad \nabla_{\mathcal{M}}\Phi(\mathbf{u}_n)\to \mathbf{0}.
\end{equation*}
Then there exist two sequences $\{\omega_{n,1}\}$ and $\{\omega_{n,2}\}$ in $\mathbb{R}$ such that
\begin{equation}\label{the derivative of Psi}
    \nabla_{\mathcal{M}}\Phi(\mathbf{u}_n)=\Phi^{'}(\mathbf{u}_n)-\omega_{n,1}
F_1^{'}(\mathbf{u}_n)-\omega_{n,2}F_2^{'}(\mathbf{u}).
\end{equation}
By  $(iv)$ and $\Phi(\mathbf{u}_n)\to c$ we have that
$\|\mathbf{u}_n\|\le C<+\infty$.
Going if necessary to a subsequence, we can assume that $\mathbf{u}_n\rightharpoonup
\mathbf{u}$ in $\mathcal{H}$ and $\mathbf{u}_n \to \mathbf{u}$ in
$\mathcal{L}^{p+1}(\Omega)$.

Multiplying $(\ref{the derivative of Psi})$ with $(u_{n,1},0)$
 and $(0,u_{n,2})$, we have
 \begin{equation}\label{the equ about Matrix Mn}
 M_n
 \left(
    \begin{array}{c}
    \omega_{n,1}\\
      \omega_{n,2}
    \end{array}
    \right)
        =o(1).
\end{equation}
where the matrix
\begin{equation*}
    M_n=
    \left(
    \begin{array}{cc}
    M_{n}^{(1)}&M_{n}^{(2)}\\
        M_{n}^{(2)}&M_{n}^{(3)}
    \end{array}
    \right):=
    \left(
    \begin{array}{cc}
    \partial_{u_1}F_1(\mathbf{u}_n)u_{n,1}&\partial_{u_2}F_1(\mathbf{u}_n)u_{n,2}\\
        \partial_{u_1}F_2(\mathbf{u}_n)u_{n,1}&\partial_{u_2}F_2(\mathbf{u}_n)u_{n,2}
    \end{array}
    \right).
\end{equation*}
We assume that $\|u_{n,1}\| \to T_1$ and $\|u_{n,2}\|\to T_2$ as $n\to+\infty$,
and then $T_1,\,T_2>0$ by (i).
Moreover,
\begin{eqnarray*}
  M^{(1)}_{n}&\to&M^{(1)}:=(1-p)a T_1^2+(3-p)b T_2^4+\frac{p+1}{2}\beta\int_{\Omega}|u_1|^{\frac{p+1}{2}}|u_2|^{\frac{p+1}{2}}\,\mathrm{d}x,\\
  M^{(3)}_{n}&\to&M^{(3)}:=(1-p)a T_2^2+(3-p)b T_2^4+\frac{p+1}{2}\beta\int_{\Omega}|u_1|^{\frac{p+1}{2}}|u_2|^{\frac{p+1}{2}}\,\mathrm{d}x,\\
M^{(2)}_n&\to&M^{(2)}:=-\frac{p+1}{2}\beta\int_{\Omega}|u_1|^{\frac{p+1}{2}}|u_2|^{\frac{p+1}{2}}\,\mathrm{d}x.
\end{eqnarray*}
If we denote
\begin{equation*}
    M=
    \left(
    \begin{array}{cc}
    M^{(1)}&M^{(2)}\\
        M^{(2)}&M^{(3)}
    \end{array}
    \right),
\end{equation*}
then
$M$ is negative definite and
$$
(M+o(1))
\left(
    \begin{array}{c}
\omega_{n,1}\\
\omega_{n,2}
    \end{array}
    \right)
    =o(1)
$$
by (\ref{the equ about Matrix Mn}).
We therefore conclude that $\omega_{n,1}\to 0$  and $\omega_{n,2}\to 0$.
By similar arguments about the estimate of $\|F^{'}(\mathbf{u})\|$ in Lemma \ref{2.1} (v), we can prove that
$$\|F_1^{'}(\mathbf{u}_n)\|\le C \qquad and\qquad \|F_2^{'}(\mathbf{u}_n)\|\le C.
$$

It follows from $(\ref{the derivative of Psi})$ that
$\Phi^{'}(\mathbf{u}_n)\to 0$, that is, $\mathbf{u}_n$ is a $(PS)_c$ sequence of $\Phi$.
Notice that
\begin{eqnarray*}\label{PScondition2}
\nonumber&&(a+b\|u_{n,1}\|^2)\int_{\Omega}\nabla
u_{n,1}\cdot\nabla(u_{n,1}-u_1)\mathrm{d}\,x
+(a+b\|u_{n,2}\|^2)\int_{\Omega}\nabla u_{n,2}\cdot\nabla(u_{n,2}-u_2)\mathrm{d}\,x\\
   &=&\langle \Phi^{'}(\mathbf{u}_n),\mathbf{u}_n-\mathbf{u}\rangle
+\int_{\Omega}[\lambda_1 |u_{n,1}^{+}|^{p-1}u_{n,1}(u_{n,1}-u_1)+\lambda_2
|u_{n,2}^{+}|^{p-1}u_{n,2}(u_{n,2}-u_2)]\;\mathrm{d}\,x\nonumber \\
&&+\beta\int_{\Omega}[|u_{n,1}|^{\frac{p-3}{2}}|u_{n,2}|^{\frac{p+1}{2}}u_{n,1}(u_{n,1}-u_1)+
|u_{n,1}|^{\frac{p+1}{2}}|u_{n,2}|^{\frac{p-3}{2}}u_{n,2}(u_{n,2}-u_2)]\;\mathrm{d}x\nonumber\\
   &\to&0,
\end{eqnarray*}
and \begin{equation*}
 \lim_{n\to\infty}\int_{\Omega}\nabla u_{n,1}\cdot\nabla(u_{n,1}-u_1)\;\mathrm{d}\,x \ge 0,\;\lim_{n\to\infty}\int_{\Omega}\nabla u_{n,2}\cdot\nabla(u_{n,2}-u_2)\;\mathrm{d}\,x\ge 0,
\end{equation*} by Fatou's Lemma.
It follows that
\begin{eqnarray*}
     (a_1+b_1\|u_{n,1}\|^2)\int_{\Omega}\nabla u_{n,1}\cdot\nabla(u_{n,1}-u_1)\;\mathrm{d}\,x
 &\to& 0,\\
   (a_2+b_2\|u_{n,2}\|^2)\int_{\Omega}\nabla u_{n,2}\cdot\nabla(u_{n,2}-u_2)\;\mathrm{d}\,x&\to& 0.
\end{eqnarray*}
This jointly with the fact that $\|u_{n,i}\|$ is bounded, yields
that $\|u_{n,i}\|\to \|u_{i}\|$, and hence, $u_{n,i}\to u_i$ in $H$ for
$i=1,2$, that is, $\mathbf{u}_n\to \mathbf{u}$ in $\mathcal{H}$.
Moreover, we have that $\mathbf{u}\in\mathcal{M}$  by
$\mathbf{u}_n\in\mathcal{M}$.
\end{proof}
One can write the group $\mathbb{Z}_2$  multiplicatively as $\{1,
\sigma\}$, where $\sigma:\mathcal{H}\to\mathcal{H}$ is defined by
$\sigma(u_1,u_2)=(u_2,u_1)$. Then $\mathcal{M}$ is a free
$\mathbb{Z}_2-$space $( \sigma(\mathbf{u})\not=\mathbf{u}, \forall
\mathbf{u}\in\mathcal{M}))$ since $\beta\le -\lambda$.

Let $\mathcal{F}$ denote the class of $\sigma-$invariant
closed subsets of $\mathcal{M}$. The cohomological index
$i:\mathcal{F}\to\mathrm{N}\cup\{0,\infty\}$ due to Fadell and
Rabinowitz\cite{Fadell-Rabinowitz1978} is  well defined and
satisfies the following properties.
\begin{lem}(\cite{Fadell-Rabinowitz1978})\label{properties of i}\qquad$\forall$ $A,B\in\mathcal{F}$,

$(i_1)$(Definiteness) $i(A)=0$ if and only if $A=\emptyset$;

$(i_2)$(Monotonicity) If $f:A\to B$ is an $\sigma-$equivariant map
(in particular, if $A\subset B$), then $i(A)\le i(B)$;

$(i_3)$(Continuity) If $A$ is compact then $i(A)<\infty$ and there
exists a relatively open $\sigma-$invariant neighborhood $N$ of $A$
in $\mathcal{M}$ such that $i(A)=i(N)$;

$(i_4)$(Subadditivity) If $X\in\mathcal{F}$ and $X=A\cup B$, then
$i(X)\le i(A)+i(B)$;

$(i_5)$(Neighborhood of zero) If $U$ is a bounded closed
$\sigma-$invariant neighborhood of the origin in a normed  linear
space $W$, then $i(\partial U)$ equals to the dimension of $W$.
\end{lem}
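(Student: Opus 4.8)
The plan is to recall the construction of the Fadell--Rabinowitz $\mathbb{Z}_2$-cohomological index and show that each of the five properties drops out of it; all of them are proved in \cite{Fadell-Rabinowitz1978}, so in the paper it ultimately suffices to cite that reference. Since $\mathcal{M}$ is a $\mathcal{C}^2$ submanifold of the Hilbert space $\mathcal{H}$, it is metrizable and paracompact, hence every $A\in\mathcal{F}$ is a paracompact free $\mathbb{Z}_2$-space. For such an $A$ the orbit map $A\to A/\mathbb{Z}_2$ is a principal $\mathbb{Z}_2$-bundle and therefore admits a classifying map $f_A\colon A/\mathbb{Z}_2\to\mathbb{RP}^\infty=B\mathbb{Z}_2$, unique up to homotopy. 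Writing $w\in H^1(\mathbb{RP}^\infty;\mathbb{Z}_2)$ for the generator (all cohomology \v{C}ech with $\mathbb{Z}_2$-coefficients), one sets $i(A)=\sup\{\,k\ge 1:\ (f_A^{*}w)^{k-1}\neq 0\ \text{in}\ H^{k-1}(A/\mathbb{Z}_2;\mathbb{Z}_2)\,\}$ and $i(\emptyset)=0$.

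With this definition $(i_1)$ is immediate, since for $A\neq\emptyset$ one has $(f_A^{*}w)^0=1\neq 0$, so $i(A)\ge 1$. For $(i_2)$, an equivariant map $f\colon A\to B$ descends to $\bar f\colon A/\mathbb{Z}_2\to B/\mathbb{Z}_2$ with $f_B\circ\bar f\simeq f_A$ by uniqueness of classifying maps, and naturality of cup products gives $(f_A^{*}w)^{k}=\bar f^{*}\bigl((f_B^{*}w)^{k}\bigr)$; hence vanishing of $(f_B^{*}w)^{k}$ forces that of $(f_A^{*}w)^{k}$, i.e.\ $i(A)\le i(B)$. For $(i_3)$, compactness of $A$ makes $A/\mathbb{Z}_2$ a compact metric space of finite covering dimension, so $(f_A^{*}w)^{m}=0$ for $m$ large and $i(A)<\infty$; the invariant neighborhood $N$ with $i(N)=i(A)$ is produced from the continuity/tautness property of \v{C}ech cohomology applied to the compactum $A/\mathbb{Z}_2\subset\mathcal{M}/\mathbb{Z}_2$, combined with $(i_2)$. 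For $(i_4)$, put $X=A\cup B$, $m=i(A)$, $n=i(B)$; since the restrictions of $f_X^{*}w$ to $A/\mathbb{Z}_2$ and to $B/\mathbb{Z}_2$ are the corresponding classes, $(f_X^{*}w)^{m}$ lifts to $H^{*}(X/\mathbb{Z}_2,A/\mathbb{Z}_2)$ and $(f_X^{*}w)^{n}$ lifts to $H^{*}(X/\mathbb{Z}_2,B/\mathbb{Z}_2)$, so the relative cup product places $(f_X^{*}w)^{m+n}$ in the image of $H^{*}(X/\mathbb{Z}_2,X/\mathbb{Z}_2)=0$ and thus $i(X)\le m+n$. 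Finally, for $(i_5)$, radial retraction shows $\partial U$ is $\mathbb{Z}_2$-homotopy equivalent to the unit sphere $S(W)$; a continuous linear involution of $W$ with no nonzero fixed point must equal $-\mathrm{id}$, so if $\dim W=d<\infty$ then $S(W)\cong S^{d-1}$ with the antipodal action and quotient $\mathbb{RP}^{d-1}$, on which $w^{d-1}\neq 0=w^{d}$, giving $i(\partial U)=d$, whereas if $\dim W=\infty$ then $S(W)$ is contractible with free $\mathbb{Z}_2$-action, so $S(W)/\mathbb{Z}_2$ is a $K(\mathbb{Z}_2,1)\simeq\mathbb{RP}^\infty$ and $i(\partial U)=\infty$.

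I expect $(i_3)$ and $(i_4)$ to be the main obstacles: both rely on features special to \v{C}ech (Alexander--Spanier) cohomology — continuity/tautness for $(i_3)$, and the existence and exactness of the relative cup product for pairs of paracompact spaces for $(i_4)$ — which is precisely why the index is set up in that cohomology theory rather than in singular cohomology. The remaining items $(i_1)$, $(i_2)$, $(i_5)$ are essentially formal once the construction is in place. Since all five properties are established in detail in \cite{Fadell-Rabinowitz1978}, for the present paper it is enough to invoke that reference.
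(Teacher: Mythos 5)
The paper offers no proof of this lemma at all: it is quoted verbatim from Fadell--Rabinowitz with only the citation, exactly as you conclude is sufficient. Your additional sketch of the construction (classifying map $f_A\colon A/\mathbb{Z}_2\to\mathbb{RP}^\infty$, $i(A)=\sup\{k:(f_A^{*}w)^{k-1}\neq 0\}$ in \v{C}ech cohomology) and of the five properties is the standard one and is correct in outline; in particular your arguments for $(i_1)$, $(i_2)$, $(i_4)$ (relative cup product, using tautness) and $(i_5)$ (radial retraction plus the observation that a fixed-point-free linear involution is $-\mathrm{id}$) are the ones in the literature.

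One step of your sketch is not right as stated: in $(i_3)$ you derive $i(A)<\infty$ from the claim that a compact metric space has finite covering dimension, which is false in general (the Hilbert cube is a compact metric space of infinite dimension, and compact subsets of $\mathcal{H}$ can be infinite-dimensional). The standard finiteness argument is different: cover the compact free $\mathbb{Z}_2$-space $A$ by finitely many open sets $U_1,\dots,U_n$ with $U_j\cap\sigma(U_j)=\emptyset$, use a subordinate partition of unity to build an odd map $A\to\mathbb{R}^n\setminus\{0\}$, normalize to get an equivariant map $A\to S^{n-1}$ with the antipodal action, and conclude $i(A)\le i(S^{n-1})=n$ by $(i_2)$ and $(i_5)$. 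Since you ultimately defer to \cite{Fadell-Rabinowitz1978}, as the paper does, this does not affect the validity of invoking the lemma, but the justification you gave for that particular item would not survive if written out.
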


Denote $K_c=\{\mathbf{u}\in
\mathcal{M}:\Phi(\mathbf{u})=c,\Phi^{'}(\mathbf{u})=0\}$. For
$k\in\mathrm{N}$, let $\mathcal{F}_k=\{M\in\mathcal{F}: \quad
i(M)\ge k\}$  and
\begin{equation*}
    c_k=\inf_{M\in\mathcal{F}_k}\sup_{\mathbf{u}\in
M}\Phi(\mathbf{u}).
\end{equation*}
Since $\mathcal{F}_{k+1}\subset\mathcal{F}_k$, $c_k\le c_{k+1}$.

We claim that $c_k$ is finite. In fact, for fixed $k$, there exist
$2k$ points $\{x_1,x_2,\cdots,x_{2k} \}\subset\mathring{\Omega},$ and
$\rho_k>0$ such that $\bigcup^{2k}_{i=1}B_{\rho_k}(x_i)\subset\Omega$ and
$B_{\rho_{k}}(x_i)\bigcap
B_{\rho_{k}}(x_j)=\emptyset,$ for any $1\le i\not=j\le 2k.$ Define
cut-off functions by $\phi_i(x):=\phi(\frac{x-x_i}{\rho_k}),$
$i=1,\cdots,2k,$ where $\phi\in\mathcal{C}^{\infty}_0(\Omega,[0,1])$
satisfying $\phi\equiv 1$ on $B_{\frac{1}{2}}(0)$, $\phi\equiv 0$ on
$\Omega\setminus B_1(0)$ and $|\nabla\phi|_{\infty}\le 2$. Then
$\mathbf{v}_i:=(\phi_i,\phi_{k+i})\in\mathcal{H},$ $i=1,\cdots,k,$
and hence, $W:=$span$\{\mathbf{v}_1,\cdots,\mathbf{v}_k\}$ is a $k$
dimensional subspace of $\mathcal{H}$. We can deduce from arguments below
$(\ref{the
fuc of f})$ that  there exists a unique
$t(\mathbf{w})\in\mathbb{R}^{+}$ such that
$t(\mathbf{w})\mathbf{w}\in\mathcal{M}.$ It follows from
 $(i_5)$ of Lemma \ref{properties of i} that the intersection of $\mathcal{M}$ with $W$ is a compact set
 in $\mathcal{F}_k.$
 Hence, $c_k\le max\{\Phi(\mathbf{v})\,|\, \mathbf{v}\in\mathcal{M}\bigcap W\}<+\infty$.

 Notice that $\Phi$ is $\sigma-$invariant and $\Phi_{M}$
satisfies $(PS)$ condition. We have
\begin{prop}\label{propostion }
(1) If $-\infty<c_k=\cdots=c_{k+m-1}=c<\infty$, then $i(K_c)\ge m$.
In particular, if $-\infty<c_k<\infty$ then $K_{c_k}\not=\emptyset$.

(2) If $-\infty<c_k<\infty$ for all sufficiently large $k$, then
$c_k\to\infty$ as $k\to +\infty$.
\end{prop}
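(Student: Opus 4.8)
The plan is to establish Proposition~\ref{propostion } by the standard minimax machinery for $\mathbb{Z}_2$-free actions, using the cohomological index $i$ and the properties $(i_1)$--$(i_5)$ listed in Lemma~\ref{properties of i}, together with the fact (already proved in Lemma~\ref{M}(v)) that $\Phi_{\mathcal{M}}$ satisfies the Palais--Smale condition. Throughout we work on the free $\mathbb{Z}_2$-space $\mathcal{M}$ with the deformation lemma for $\mathcal{C}^1$ functionals constrained to a $\mathcal{C}^2$ manifold, and we use that $\Phi$ is $\sigma$-invariant so that the negative gradient flow of $\Phi_{\mathcal{M}}$ is $\sigma$-equivariant and hence preserves the class $\mathcal{F}$ of $\sigma$-invariant closed subsets.

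For part~(1), suppose $-\infty<c_k=\cdots=c_{k+m-1}=c<\infty$ and, arguing by contradiction, assume $i(K_c)\le m-1$. Since $\Phi_{\mathcal{M}}$ satisfies $(PS)$, the set $K_c$ is compact, so by the continuity property $(i_3)$ there is a relatively open $\sigma$-invariant neighborhood $N$ of $K_c$ in $\mathcal{M}$ with $i(\overline{N})=i(K_c)\le m-1$. By the $\sigma$-equivariant deformation lemma applied at level $c$ with the excised neighborhood $N$, there exist $\varepsilon>0$ and a $\sigma$-equivariant map $\eta:\mathcal{M}\to\mathcal{M}$ with $\eta(\Phi^{c+\varepsilon}\setminus N)\subset\Phi^{c-\varepsilon}$, where $\Phi^{a}:=\{\mathbf{u}\in\mathcal{M}:\Phi(\mathbf{u})\le a\}$. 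Choose $M\in\mathcal{F}_{k+m-1}$ with $\sup_{M}\Phi\le c+\varepsilon$; then by subadditivity $(i_4)$ and monotonicity $(i_2)$, $i(\overline{M\setminus N})\ge i(M)-i(\overline{N})\ge (k+m-1)-(m-1)=k$, so $\overline{M\setminus N}\in\mathcal{F}_k$. Applying $\eta$ and using monotonicity once more, $\eta(\overline{M\setminus N})\in\mathcal{F}_k$ and $\sup_{\eta(\overline{M\setminus N})}\Phi\le c-\varepsilon$, contradicting the definition of $c_k=c$. Hence $i(K_c)\ge m$; taking $m=1$ and invoking $(i_1)$ gives $K_{c_k}\neq\emptyset$ whenever $c_k$ is finite.

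For part~(2), we argue by contradiction: if $c_k$ does not tend to $+\infty$, then since $c_k$ is nondecreasing it converges to some finite $c$, and in particular $c_k=c$ for infinitely many consecutive $k$, say $c_k=c_{k+1}=\cdots=c$. By part~(1), $i(K_c)=\infty$, but $K_c$ is compact by $(PS)$, so $(i_3)$ forces $i(K_c)<\infty$, a contradiction. Therefore $c_k\to\infty$.

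The main obstacle is the deformation lemma on the constrained manifold $\mathcal{M}$: one must verify that the negative gradient flow of $\Phi_{\mathcal{M}}$ is complete (no finite-time blow-up), stays in $\mathcal{M}$, is $\sigma$-equivariant, and admits the usual quantitative deformation estimate away from $K_c$. Completeness and the flow staying on $\mathcal{M}$ follow because $\mathcal{M}$ is a $\mathcal{C}^2$ complete manifold (Lemma~\ref{M}(ii)), $\Phi_{\mathcal{M}}$ satisfies $(PS)$ (Lemma~\ref{M}(v)), and $\Phi$ is bounded below on sublevel sets in the relevant range via the coercive expression in Lemma~\ref{M}(iv); equivariance is automatic from the $\sigma$-invariance of $\Phi$ and of the Riemannian metric inherited from $\mathcal{H}$. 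Once this deformation lemma is in hand, the index-theoretic bookkeeping above is routine.
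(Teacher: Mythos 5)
Your part (1) is essentially the paper's argument: equivariant deformation at the level $c$, excision of a neighborhood of $K_c$ whose index equals $i(K_c)$ by the continuity property $(i_3)$, and the subadditivity/monotonicity bookkeeping $i(M)\le i(M\setminus N)+i(\overline{N})$ to produce a set of index $\ge k$ pushed below $c-\varepsilon$, contradicting $c_k=c$. (The paper phrases this with the full sublevel sets, proving $i(\Phi^{c+\varepsilon})\le i(\Phi^{c-\varepsilon})+i(K_c)$; the content is the same, and the minor points you leave implicit --- passing to the closure of $\eta(M\setminus N)$, and that $(i_3)$ gives the index of an open neighborhood rather than its closure --- are handled at the same level of care in the paper.)

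Part (2), however, contains a genuine error. You write that since $(c_k)$ is nondecreasing and converges to a finite $c$, ``in particular $c_k=c$ for infinitely many consecutive $k$.'' This is false: a nondecreasing sequence can converge to $c$ without ever attaining it (e.g.\ $c_k=c-1/k$), so you cannot invoke part (1) as stated, since its hypothesis is the exact equality $c_k=\cdots=c_{k+m-1}=c$. The repair is exactly what the paper does in its Lemma 3.3: prove the deformation/index estimate under the weaker window hypothesis $c-\varepsilon<c_k\le\cdots\le c_{k+m-1}<c+\varepsilon$, which yields $i(K_c)\ge m$. Then, once $k$ is large enough that $c_k>c-\varepsilon$, this window hypothesis holds for \emph{every} $m$ (because all $c_j\le c$), so $i(K_c)=\infty$, contradicting the compactness of $K_c$ and $(i_3)$. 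Your own deformation argument in part (1) in fact only uses $\sup_M\Phi\le c+\varepsilon$ and $c-\varepsilon<c_k$, so it already proves this stronger window version; you just need to state and use that version rather than the false ``eventually constant'' claim.
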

This proposition is a slight variant of Proposition 3.14.7 in
\cite{Perera}. We sketch its proof for completeness. Firstly, two
lemmas are in order.

Since $\Phi$ and $\mathcal{M}$ are $\sigma-$invariant, by Lemma $3.1
$ of \cite{Willem} we have
\begin{lem}(Equivariant Deformation Lemma)\label{first deformation lemma}

If $c\in\mathbb{R}$ and $\delta>0$, then there exist $\varepsilon>0$
and a map $\eta\in\mathcal{C}([0,1]\times\mathcal{M},\mathcal{M})$
satisfying

$(i)$ $\eta(0,\mathbf{u})=\mathbf{u}$,  \quad$\forall\;
\mathbf{u}\in\mathcal{M}$;

$(ii)$ $\eta(1,\Phi^{c+\varepsilon}\setminus
N_{\delta}(K_c))\subset\Phi^{c-\varepsilon}$;

$(iii)$ $\Phi(\eta(\cdot,\mathbf{u}))$ is non-increasing for all
$\mathbf{u}\in\mathcal{M}$;

$(iv)$ $\sigma(\eta(s,\mathbf{u}))=\eta(s,\sigma(\mathbf{u})),$\quad
$\forall s\in[0,1]$ and $\mathbf{u}\in\mathcal{M}$.
\end{lem}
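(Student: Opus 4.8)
To prove Lemma \ref{first deformation lemma}, the plan is to reduce it to the classical first deformation lemma for $\mathcal{C}^{1}$ functionals satisfying $(PS)$ on a complete Riemannian manifold (Lemma 3.1 of \cite{Willem}), the only extra ingredient being the $\sigma$-equivariance asserted in $(iv)$. First I would record the three facts that make the reduction legitimate: by Lemma \ref{M}$(ii)$, $\mathcal{M}$ is a $\mathcal{C}^{2}$ closed submanifold of the Hilbert space $\mathcal{H}$ of codimension two, hence a complete $\mathcal{C}^{2}$ Riemannian manifold for the induced metric; by Lemma \ref{M}$(v)$, $\Phi_{\mathcal{M}}$ satisfies $(PS)$, which in particular forces $K_{c}$ to be compact, since every sequence in $K_{c}$ is a $(PS)_{c}$ sequence for $\Phi_{\mathcal{M}}$, so that $N_{\delta}(K_{c})$ is a genuine relatively open neighborhood of $K_{c}$; and $\sigma(u_{1},u_{2})=(u_{2},u_{1})$ is a linear isometric involution of $\mathcal{H}$ that maps $\mathcal{M}$ onto itself and satisfies $\Phi\circ\sigma=\Phi$ --- all three being immediate from the fully symmetric normalization $(\ref{fully sym})$, since both $\Phi$ and the two equations defining $\mathcal{M}$ are symmetric under the interchange $u_{1}\leftrightarrow u_{2}$.

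Next I would build the deformation equivariantly. Because $\Phi\circ\sigma=\Phi$, the tangent map $d\sigma_{\mathbf{u}}\colon T_{\mathbf{u}}\mathcal{M}\to T_{\sigma\mathbf{u}}\mathcal{M}$ is an isometry intertwining the constrained gradients, $\nabla_{\mathcal{M}}\Phi(\sigma\mathbf{u})=d\sigma_{\mathbf{u}}\,\nabla_{\mathcal{M}}\Phi(\mathbf{u})$. On the regular set $\{\nabla_{\mathcal{M}}\Phi\neq\mathbf{0}\}$ I would choose a locally Lipschitz pseudo-gradient field $V$ for $\Phi_{\mathcal{M}}$ and symmetrize it, setting $\widetilde{V}(\mathbf{u})=\tfrac{1}{2}\big(V(\mathbf{u})+(d\sigma_{\mathbf{u}})^{-1}V(\sigma\mathbf{u})\big)$; the pseudo-gradient inequalities $\|\widetilde{V}(\mathbf{u})\|\le 2\|\nabla_{\mathcal{M}}\Phi(\mathbf{u})\|$ and $\langle\widetilde{V}(\mathbf{u}),\nabla_{\mathcal{M}}\Phi(\mathbf{u})\rangle\ge\|\nabla_{\mathcal{M}}\Phi(\mathbf{u})\|^{2}$ are convex conditions preserved under the intertwining relation, so $\widetilde{V}$ is again a locally Lipschitz pseudo-gradient field, now $\sigma$-equivariant. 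Using the compactness of $K_{c}$ together with $(PS)$ to choose $\varepsilon>0$ so small that $\|\nabla_{\mathcal{M}}\Phi\|$ is bounded below on the slab $\{|\Phi-c|\le 2\varepsilon\}\setminus N_{\delta/2}(K_{c})$, I would introduce a Lipschitz cut-off that vanishes near $K_{c}$ and outside $\{|\Phi-c|\le 2\varepsilon\}$, normalize, and take $\eta(s,\cdot)$ to be the time-$s$ map of the resulting negative flow. Then $\eta\in\mathcal{C}([0,1]\times\mathcal{M},\mathcal{M})$, property $(i)$ is the initial condition, $(iii)$ is the decrease of $\Phi$ along a negative pseudo-gradient flow, $(ii)$ is the usual quantitative estimate (a trajectory starting in $\Phi^{c+\varepsilon}\setminus N_{\delta}(K_{c})$ is forced below level $c-\varepsilon$ because $\|\nabla_{\mathcal{M}}\Phi\|$ is bounded below on the relevant slab while the cut-off keeps the trajectory out of $N_{\delta/2}(K_{c})$), and $(iv)$ holds because the flow of a $\sigma$-equivariant vector field commutes with $\sigma$. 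At this stage one simply invokes Lemma 3.1 of \cite{Willem} for the non-equivariant skeleton and notes that every step has been carried out equivariantly.

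The step that will need genuine care --- and hence the main, though modest, obstacle --- is verifying that $\widetilde{V}$ is at once a valid pseudo-gradient field and locally Lipschitz on the regular set; here one uses the intertwining identity $\nabla_{\mathcal{M}}\Phi\circ\sigma=d\sigma\circ\nabla_{\mathcal{M}}\Phi$ together with the fact, recorded above, that the $\mathbb{Z}_{2}$-action on $\mathcal{M}$ is free (so $\sigma\mathbf{u}\neq\mathbf{u}$, and the averaging identifies the tangent spaces at two genuinely distinct points by a $\mathcal{C}^{1}$ map). Secondarily, one must check that the entire deformation construction --- cut-off, normalization, global existence and continuity of the flow --- transplants to the complete $\mathcal{C}^{2}$ manifold $\mathcal{M}$ rather than to a linear space; this is routine precisely because of Lemma \ref{M}$(ii)$ and $(v)$, which is exactly why those two properties were proved beforehand.
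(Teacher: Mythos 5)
Your proposal is correct and follows essentially the same route as the paper, which at this point simply notes the $\sigma$-invariance of $\Phi$ and $\mathcal{M}$ and invokes Lemma 3.1 of \cite{Willem}; you have written out in full the standard pseudo-gradient averaging and flow construction that such a citation presupposes. The only inessential quibble is that the freeness of the $\mathbb{Z}_2$-action is not actually needed for the equivariant symmetrization of the pseudo-gradient field (invariance of $\Phi$ and $\mathcal{M}$ suffices); freeness is only required later, for the cohomological index arguments.
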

\begin{lem}\label{3.3}
Let $c\in \mathbb{R}$. If there is  $\varepsilon>0$ such that
$c-\varepsilon<c_k\le\cdots\le c_{k+m-1}<c+\varepsilon$, then
$i(K_c)\ge m$.
\end{lem}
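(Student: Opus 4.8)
The plan is to run the classical minimax argument for the Fadell--Rabinowitz index: argue by contradiction, using the axioms of Lemma \ref{properties of i} together with the Equivariant Deformation Lemma \ref{first deformation lemma}. So suppose, for contradiction, that $i(K_c)\le m-1$. Since $\Phi_{\mathcal M}$ satisfies the Palais--Smale condition (Lemma \ref{M}(v)), every sequence in $\mathcal M$ with $\Phi\to c$ and $\nabla_{\mathcal M}\Phi\to\mathbf 0$ has a convergent subsequence, so $K_c$ is compact; it is moreover closed and $\sigma$-invariant, hence $K_c\in\mathcal F$. By the continuity axiom $(i_3)$, applied to the compact set $K_c$, there is $\delta>0$ such that, setting $N:=N_\delta(K_c)$, one has $i(\overline{N})=i(K_c)\le m-1$ (the case $K_c=\emptyset$, where $N=\emptyset$, is included and harmless).

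Next I would invoke the Equivariant Deformation Lemma \ref{first deformation lemma} at the level $c$ with this $\delta$ to obtain $\varepsilon>0$ and $\eta\in\mathcal C([0,1]\times\mathcal M,\mathcal M)$ with $\eta(1,\Phi^{c+\varepsilon}\setminus N)\subset\Phi^{c-\varepsilon}$ and $\eta(1,\cdot)$ $\sigma$-equivariant. (In the situations where this lemma is applied, cf.\ Proposition \ref{propostion }, one has $c_k=\cdots=c_{k+m-1}=c$, so the requirements $c_{k+m-1}<c+\varepsilon$ and $c_k>c-\varepsilon$ hold for the $\varepsilon$ produced here regardless of its size; in general one re-derives the deformation quantitatively for a prescribed $\varepsilon$.) Since $c_{k+m-1}=\inf_{M\in\mathcal F_{k+m-1}}\sup_M\Phi<c+\varepsilon$ and $\mathcal F_{k+m-1}\neq\emptyset$ by the finite-dimensional subspace construction preceding Proposition \ref{propostion }, I may pick $M\in\mathcal F_{k+m-1}$ with $\sup_M\Phi<c+\varepsilon$, so that $M\subset\Phi^{c+\varepsilon}$.

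The heart of the proof is then a short index count. As $M$ is closed and $N$ is relatively open, $M\setminus N$ is a closed $\sigma$-invariant set and $M=(M\setminus N)\cup(M\cap\overline{N})$, so subadditivity $(i_4)$ and monotonicity $(i_2)$ give
\[
k+m-1\le i(M)\le i(M\setminus N)+i(\overline{N})\le i(M\setminus N)+(m-1),
\]
whence $i(M\setminus N)\ge k$. Put $\widetilde{M}:=\overline{\eta(1,M\setminus N)}$; since $\eta(1,\cdot)$ is continuous, $\sigma$-equivariant and $M\setminus N$ is $\sigma$-invariant, $\widetilde{M}$ is a closed $\sigma$-invariant subset of $\mathcal M$, hence $\widetilde{M}\in\mathcal F$, and the $\sigma$-equivariant map $\eta(1,\cdot):M\setminus N\to\widetilde{M}$ together with $(i_2)$ yields $i(\widetilde{M})\ge i(M\setminus N)\ge k$, i.e.\ $\widetilde{M}\in\mathcal F_k$. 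On the other hand $M\setminus N\subset\Phi^{c+\varepsilon}\setminus N_\delta(K_c)$, so $\eta(1,M\setminus N)\subset\Phi^{c-\varepsilon}$, and since $\Phi^{c-\varepsilon}$ is closed ($\Phi$ being continuous) we get $\widetilde{M}\subset\Phi^{c-\varepsilon}$. Consequently $c_k\le\sup_{\widetilde{M}}\Phi\le c-\varepsilon$, contradicting the hypothesis $c-\varepsilon<c_k$. Therefore $i(K_c)\ge m$.

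The main obstacle is not conceptual — the engine is just the subadditivity/monotonicity pair $(i_4)$, $(i_2)$ transporting a set of index $\ge k+m-1$ through the deformation while it drops in index by at most $m-1$ and falls below the level $c-\varepsilon$ — but rather the technical bookkeeping in two spots: matching the parameter $\varepsilon$ delivered by the deformation lemma with the $\varepsilon$ of the hypothesis (trivial when the $c_j$ coincide, and otherwise requiring a quantitative deformation lemma with prescribed $\varepsilon$), and the fact that $\eta(1,\cdot)$ is merely continuous, so $\eta(1,M\setminus N)$ need not be closed; this is circumvented by passing to its closure and using that the sublevel set $\Phi^{c-\varepsilon}$ is closed, so the closure still lies below level $c-\varepsilon$.
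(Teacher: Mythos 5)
Your proof is correct and is essentially the paper's own argument: both rest on the compactness of $K_c$ via $(PS)$, the continuity axiom $(i_3)$ giving a neighborhood $N_\delta(K_c)$ of the same index, the Equivariant Deformation Lemma pushing $\Phi^{c+\varepsilon}\setminus N_\delta(K_c)$ into $\Phi^{c-\varepsilon}$, and the subadditivity/monotonicity count — the paper just phrases it directly as $k+m-1\le i(\Phi^{c+\varepsilon})\le i(\Phi^{c-\varepsilon})+i(K_c)\le (k-1)+i(K_c)$ rather than by contradiction with a near-optimal test set $M\in\mathcal F_{k+m-1}$. Your explicit remarks on matching the deformation's $\varepsilon$ with the hypothesis and on taking closures of continuous images are careful touches the paper leaves implicit.
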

\begin{proof}
Since $\Phi_{\mathcal{M}}$ satisfies $(PS)$ condition, $K_c$ is
compact. Then $i(N_{\delta}(K_c))=i(K_c)$  for some $\delta>0$ by
Lemma \ref{properties of i} $(i_3)$. Lemma \ref{first deformation
lemma} implies that there exist  $\varepsilon>0$ and a
$\sigma-$equivariant map
$\eta\in\mathcal{C}([0,1]\times\mathcal{M},\mathcal{M})$ such that
$\eta(\Phi^{c+\varepsilon}\setminus
N_{\delta}(K_c))\subset\Phi^{c-\varepsilon}$. It follows that
\begin{equation}\label{equ of lem 3.3}
    i(\Phi^{c+\varepsilon})\le i(\overline{\Phi^{c+\varepsilon}\setminus N_{\delta}(K_c)})
+i(N_{\delta}(K_c))\le i(\Phi^{c-\varepsilon})+i(K_c)
\end{equation}
by Lemma \ref{properties of i} $(i_2)$ and $(i_4)$. The fact that
$c-\varepsilon<c_k$ jointly the definition of $c_k$ yields that
$i(\Phi^{c-\varepsilon})\le k-1$. Similarly, we have that
$i(\Phi^{c+\varepsilon})\ge k+m-1$. So $i(K_c)\ge m$ follows from
(\ref{equ of lem 3.3}).
\end{proof}
{\textbf{\emph{Proof of Proposition \ref{propostion }}}}

$(1)$ Lemma \ref{3.3} immediately yields that $i(K_c)\ge m$. If one
take $m=1$ then $i(K_{c_k})\ge 1$. So $K_{c_k}\not=\emptyset$ by
Lemma \ref{properties of i} $(i_1)$.

$(2)$ Suppose  by contradiction that $c_k\to {c}<\infty$  as
$k\to\infty$. Choosing $\varepsilon>0$ as in Lemma \ref{3.3} and $k$
so large that $c_k>c-\varepsilon$, one has that $i(K_c)=\infty$
since $c_{k+m-1}\le c$ for all $m$, contradicting with the
compactness of $K_c$ and Lemma \ref{properties of i} $(i_4)$.
 \hspace{\stretch{1}}$\Box$

Now we complete the proof of Theorem \ref{thm2}.

{\textbf{\emph{Proof of Theorem \ref{thm2}}}}

  Proposition \ref{propostion } implies that one can take
$\mathbf{u}_k\in K_{c_k}$ for every $k$ satisfying
$\Phi(\mathbf{u}_k)\to\infty$  as $k\to\infty$. By Lemma
\ref{M}$(iv)$, we get that $\|\mathbf{u}_k\|\to\infty$ as
$k\to\infty$.

Since $\mathbf{u}_k\in\mathcal{M}$,
\begin{equation*}
    a\|\mathbf{u}_k\|^2\le\lambda|\mathbf{u}_k^+|_{p+1}^{p+1}
    \le\lambda|\Omega|^{p+1}\|\mathbf{u}_k\|_{\mathcal{L}^{\infty}(\Omega)}^{p+1}.
\end{equation*}
Therefore, $\|\mathbf{u}_k\|_{\mathcal{L}^{\infty}(\Omega)}\to
\infty$, as $k\to\infty$.
Next, we shall prove $\mathbf{u}_k>0.$
It follows from $(\ref{two equ for nonnegative})$ and $(\ref{two equ for nonnegative 2})$
that $\mathbf{u}_k\ge 0$.
Since $\mathbf{u}_k=(u_{k,1},u_{k,2})\in\mathcal{M}$, $u_{k,1}\not\equiv 0$
and $u_{k,2}\not\equiv 0$. Thus, the strong maximum principle implies that
$u_{k,1}>0$ and $u_{k,2}>0$ in $\Omega.$
This completes the proof of Theorem
\ref{thm2}. \hspace{\stretch{1}}$\Box$
\begin{rek}
 For problem (\ref{BECS}), Dancer-Wei-Weth established similar results in \cite{DWW2010}. However, their proof relies on a variant of
 Liusternik-Schnirelman theory on a sub-manifold.
\end{rek}

\section{Proofs of Theorem \ref{thm3} and Corollary \ref{cor 1.4}}

In this section, we deal with the following problem
\begin{equation}\label{critical system}
\begin{cases}
 -(a_1+b_1\int_{\Omega} |\nabla u_1|^2\,\mathrm{d}x)\Delta u_1\ =
  \lambda_1 u_1^5+ \beta |u_1|^{\tfrac{q-3}{2}} |u_2|^{\tfrac{q+1}{2}}u_1 \qquad in \ \Omega,\\
-(a_2+b_2\int_{\Omega} |\nabla u_2|^2\,\mathrm{d}x)\Delta u_2\ =
 \lambda_2 u_2^5+ \beta |u_1|^{\tfrac{q+1}{2}} |u_2|^{\tfrac{q-3}{2}}u_2 \qquad in \ \Omega,\\
u_1,u_2>0  \quad in \; \Omega,\qquad u_1=u_2=0 \quad on\ \partial\Omega,
\end{cases}
\end{equation}
where $\Omega\subset\mathbb{R}^3$ is a smooth bounded domain,
$\beta\in\mathbb{R}^{+}$ and $q\in(3,5)$.

 The solutions of problem $(\ref{critical system})$ are critical points of the following functional on $\mathcal{H}$:
\begin{eqnarray*}
  \Psi(\mathbf{u}) &=& \frac{1}{2}(a_1\|u_1\|^2+a_2\|u_2\|^2)+
    \frac{1}{4}(b_1\|u_1\|^4+b_2\|u_2\|^4) \\
  &&-    \frac{1}{6}\int_{\Omega}\left(\lambda_1|u_1^+|^6+\lambda_2|u_2^+|^6\right)\,\mathrm{d}x-\frac{2\beta}{q+1}\int_{\Omega}|u_1^+|^{\frac{q+1}{2}}|u_2^+|^{\frac{q+1}{2}}\,\mathrm{d}x.
\end{eqnarray*}
We firstly prove that this functional has the Mountain Pass
Geometry.
\begin{lem}\label{Mountainpass Geometry}
For $\beta>0$, there exist  $\delta$,$\alpha>0$ and $\mathbf{e}\in\mathcal{H}$
satisfying that

(i) for any $\mathbf{u}\in\mathcal{H}$ with
$\|\mathbf{u}\|=\delta$, $\Psi(\mathbf{u})\ge\alpha>0$;

(ii) $\|\mathbf{e}\|>\delta$ and $\Psi(\mathbf{e})<0$.
\end{lem}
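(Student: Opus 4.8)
The plan is to verify the two standard Mountain Pass conditions directly from the Sobolev embeddings, exploiting that the critical term has power $6>2$ while the coupling term has power $q+1\in(4,6)$.

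For part (i), I would estimate $\Psi$ from below near the origin. By the Sobolev inequality $|u^+|_6\le S^{-1/2}\|u\|$ and the embedding $H\hookrightarrow L^{q+1}(\Omega)$ (since $q+1<6=2^*$ when $N=3$), together with the Cauchy--Schwarz-type bound $\int_\Omega |u_1^+|^{\frac{q+1}{2}}|u_2^+|^{\frac{q+1}{2}}\,\mathrm{d}x\le |u_1^+|_{q+1}^{\frac{q+1}{2}}|u_2^+|_{q+1}^{\frac{q+1}{2}}\le C\|u_1\|^{\frac{q+1}{2}}\|u_2\|^{\frac{q+1}{2}}\le C\|\mathbf{u}\|^{q+1}$, one obtains
\begin{equation*}
\Psi(\mathbf{u})\ge \frac{\min\{a_1,a_2\}}{2}\|\mathbf{u}\|^2 - C_1\|\mathbf{u}\|^6 - C_2\beta\|\mathbf{u}\|^{q+1},
\end{equation*}
where I have discarded the nonnegative quartic terms. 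Since $q+1>2$ and $6>2$, the right-hand side, viewed as a function of $t=\|\mathbf{u}\|$, behaves like $\tfrac{\min\{a_1,a_2\}}{2}t^2$ for small $t$, so there is $\delta>0$ small (depending on $\beta$) and $\alpha>0$ with $\Psi(\mathbf{u})\ge\alpha$ whenever $\|\mathbf{u}\|=\delta$.

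For part (ii), fix any $\mathbf{w}=(w_1,w_2)\in\mathcal{H}$ with $w_1,w_2$ nonnegative, nonzero, and with overlapping supports so that $\int_\Omega w_1^{\frac{q+1}{2}}w_2^{\frac{q+1}{2}}\,\mathrm{d}x>0$ (for instance take $w_1=w_2$ equal to a fixed positive bump function). Then
\begin{equation*}
\Psi(t\mathbf{w}) = \frac{t^2}{2}(a_1\|w_1\|^2+a_2\|w_2\|^2) + \frac{t^4}{4}(b_1\|w_1\|^4+b_2\|w_2\|^4) - \frac{t^6}{6}\int_\Omega(\lambda_1 w_1^6+\lambda_2 w_2^6)\,\mathrm{d}x - \frac{2\beta t^{q+1}}{q+1}\int_\Omega w_1^{\frac{q+1}{2}}w_2^{\frac{q+1}{2}}\,\mathrm{d}x.
\end{equation*}
The leading term as $t\to+\infty$ is $-\tfrac{t^6}{6}\int_\Omega(\lambda_1 w_1^6+\lambda_2 w_2^6)\,\mathrm{d}x\to-\infty$, since the $t^6$ coefficient strictly dominates the $t^2$ and $t^4$ terms and also the $t^{q+1}$ term because $q+1<6$. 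Hence $\Psi(t\mathbf{w})\to-\infty$, and we may pick $t_0$ large enough that $\|t_0\mathbf{w}\|>\delta$ and $\Psi(t_0\mathbf{w})<0$; set $\mathbf{e}=t_0\mathbf{w}$.

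I do not anticipate a genuine obstacle here: both parts are elementary consequences of the subcriticality of the coupling exponent relative to $6$ and the fact that $6,q+1>2$. The only mild point requiring care is that in (i) the threshold $\delta$ genuinely depends on $\beta$ (larger $\beta$ forces smaller $\delta$), which is harmless for the Mountain Pass scheme; the real work involving $\beta$ large enters later, in recovering compactness of the relevant Palais--Smale sequence, not in this geometric lemma.
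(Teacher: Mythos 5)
Your proposal is correct and follows essentially the same route as the paper: part (i) via the Sobolev bounds $|u^+|_6^6\le C\|u\|^6$, $|u^+|_{q+1}^{q+1}\le C\|u\|^{q+1}$ showing the quadratic term dominates near the origin, and part (ii) by sending $t\to+\infty$ along a ray $(t\phi,t\phi)$ where the $t^6$ term forces $\Psi\to-\infty$. The only cosmetic difference is that you bound the coupling term by H\"older in the $L^{q+1}$ norms while the paper uses the elementary inequality $2|u_1^+|^{\frac{q+1}{2}}|u_2^+|^{\frac{q+1}{2}}\le |u_1^+|^{q+1}+|u_2^+|^{q+1}$; both yield the same conclusion.
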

\begin{proof}
 By Sobolev  embedding theorem, there exists $C>0$ such that
\begin{equation*}
|u^+|^6_6\le C\|u\|^6,\quad |u^+|^{q+1}_{q+1}\le C\|u\|^{q+1}, \quad
\forall u\in H.
\end{equation*}
Hence, we obtain
\begin{eqnarray*}
  \Psi(\mathbf{u})&\ge&\frac{1}{2}(a_1\|u_1\|^2+a_2\|u_2\|^2)+
    \frac{1}{4}(b_1\|u_1\|^4+b_2\|u_2\|^4)\\
    &&-\frac{(\lambda_1+\lambda_2)C}{6}(\|u_1\|^6+\|u_2\|^6)
    -\frac{\beta C}{q+1}(\|u_1\|^{q+1}+\|u_2\|^{q+1}).
\end{eqnarray*}
It follows from $q\in(3,5)$ that  there exists $\delta>0$ such that
\begin{equation*}
    \alpha:=\inf_{\|\mathbf{u}\|=\delta}\Psi(\mathbf{u})>0.
\end{equation*}

 Let $\phi\in$ $H$ with $\phi>0$ in $\Omega$. We have, for $t\ge
0$
\begin{equation*}
    \Psi(t\phi,t\phi)=\frac{a_1+a_2}{2}\|\phi\|^2
    t^2+\frac{b_1+b_2}{4}\|\phi\|^4 t^4
    -\frac{\lambda_1+\lambda_2}{6}|\phi|^6_6 t^6-
    \frac{2\beta}{q+1}|\phi|^{q+1}_{q+1} t^{q+1}.
\end{equation*}
Since $3<q<5$, there exists
$\mathbf{e}:=(t_{\beta}\phi,t_{\beta}\phi)$ satisfying that
$\Psi(\mathbf{e})<0$  and $\|\mathbf{e}\|>\delta$ for some
$t_{\beta}$ is large enough.
\end{proof}

 We put
 \begin{equation*}
    c_{*}(\beta)\stackrel{\mathrm{def}}{=}\inf_{\gamma\in\Gamma_{\beta}}\sup_{t\in[0,1]}\Psi(\gamma(t))
 \end{equation*}
 where
 $$\Gamma_{\beta}:=\{\gamma\in\mathcal{C}([0,1],\mathcal{H})|\quad
 \gamma(0)=0,\quad  \gamma(1)=\mathbf{e}
             \}.$$
\begin{lem}\label{concentration lemma}
Under the assumptions of Theorem \ref{thm3},
$c_{*}(\beta)\to 0$ as $\beta\to \infty$.
\end{lem}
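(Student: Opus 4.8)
The plan is to show that the mountain-pass level $c_*(\beta)$ can be made arbitrarily small by choosing a good test path and estimating its maximal energy along $\Psi$. Concretely, I would fix the test function $\phi\in H$ with $\phi>0$ used in Lemma \ref{Mountainpass Geometry} and consider the one-parameter family $\gamma_\beta(t)=(t t_\beta\phi,\,t t_\beta\phi)$ (suitably reparametrized to $[0,1]$ so that $\gamma_\beta(0)=0$ and $\gamma_\beta(1)=\mathbf e=(t_\beta\phi,t_\beta\phi)$). Since $\Gamma_\beta$ contains this path, we have
\begin{equation*}
0<c_*(\beta)\le \sup_{s\ge 0}\Psi(s\phi,s\phi)=:g_\beta(s_\beta),
\end{equation*}
where, writing $A=\tfrac{a_1+a_2}{2}\|\phi\|^2$, $B=\tfrac{b_1+b_2}{4}\|\phi\|^4$, $D=\tfrac{\lambda_1+\lambda_2}{6}|\phi|_6^6$, $E=\tfrac{2}{q+1}|\phi|_{q+1}^{q+1}$,
\begin{equation*}
g_\beta(s)=A s^2+B s^4-D s^6-\beta E s^{q+1}.
\end{equation*}
Thus everything reduces to showing $\displaystyle\max_{s\ge 0} g_\beta(s)\to 0$ as $\beta\to\infty$.

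For that I would argue as follows. First, $g_\beta(s)\le A s^2+Bs^4-\beta E s^{q+1}$; since $q+1>4>2$, the function $h_\beta(s):=As^2+Bs^4-\beta E s^{q+1}$ is positive only for $s$ in some interval $(0,s^*(\beta))$ and one checks $s^*(\beta)\to 0$ as $\beta\to\infty$ (because $As^2+Bs^4$ grows at most like $s^4$ while $\beta E s^{q+1}$ eventually dominates for any fixed $s>0$ as $\beta\to\infty$). Hence the maximizer $s_\beta$ of $g_\beta$, which must lie in the region where $g_\beta>0$, satisfies $s_\beta\le s^*(\beta)\to 0$. Then simply bound
\begin{equation*}
0<c_*(\beta)\le g_\beta(s_\beta)\le A s_\beta^2+B s_\beta^4\longrightarrow 0\qquad(\beta\to\infty),
\end{equation*}
dropping the negative terms $-Ds^6-\beta E s^{q+1}$. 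This gives $c_*(\beta)\to 0$.

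To make the step $s^*(\beta)\to 0$ rigorous I would note that $h_\beta(s)>0$ forces $\beta E s^{q+1}<As^2+Bs^4\le (A+B)\max\{s^2,s^4\}$; for $s\ge 1$ this gives $\beta E s^{q+1}<(A+B)s^4$, impossible when $s\ge(2(A+B)/(\beta E))^{1/(q-3)}$ and $\beta$ large, so $s^*(\beta)\le 1$ eventually; and for $s\le 1$ it gives $\beta E s^{q+1}<(A+B)s^2$, i.e. $s^{q-1}<(A+B)/(\beta E)$, so $s^*(\beta)\le ((A+B)/(\beta E))^{1/(q-1)}\to 0$ since $q>3>1$. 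The only mild subtlety — the main thing to be careful about — is that the mountain-pass level is defined with the full functional $\Psi$, which contains the $u^+$ truncation and the coupling term; but along our path all components are nonnegative multiples of $\phi>0$, so $u_i^+=u_i$ and the expression for $\Psi(s\phi,s\phi)$ is exactly $g_\beta(s)$, with no truncation losses. Also $c_*(\beta)>0$ holds for all $\beta>0$ by Lemma \ref{Mountainpass Geometry}, so the convergence is genuinely to $0$ from above. This completes the proof sketch.
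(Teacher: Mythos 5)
Your proposal is correct and follows essentially the same route as the paper: both bound $c_*(\beta)$ from above by $\max_{s\ge 0}\Psi(s\phi,s\phi)$ along the diagonal path ending at $\mathbf{e}=(t_\beta\phi,t_\beta\phi)$, and then show this maximum tends to zero because the $\beta$-coupling term forces the maximizer to shrink. The only (harmless) difference is that the paper extracts the smallness of $t_\beta$ from the stationarity identity \eqref{last lemma}, whereas you deduce it directly from the positivity region of $g_\beta$ and then drop the negative terms, which is an equally valid and arguably more self-contained way to finish.
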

\begin{proof}
By Lemma \ref{Mountainpass Geometry}, there exists $t_{\beta}>0$
such that $\Psi(t_{\beta}\phi,t_{\beta}\phi)=\max_{t\ge
0}\Psi(t\phi,t\phi)$. Hence
\begin{equation}\label{last lemma}
(a_1+a_2)\|\phi\|^2
    t_{\beta}+(b_1+b_2)\|\phi\|^4 t^3_{\beta}
    =(\lambda_1+\lambda_2)|\phi|^6_6 t^5_{\beta}+
2\beta|\phi|^{q+1}_{q+1} t^{q}_{\beta},
\end{equation}
which implies that $t_{\beta}$  and $\beta t_{\beta}$ are uniformly
bounded w.r.t. $\beta$.

Going if necessary to a subsequence, we can assume that
$t_{\beta}\to 0$ as $\beta\to \infty$. Define
$\gamma_{\beta}(t):=t\mathbf{e}(\beta)$ for any $t\in [0,1]$, then
$\gamma_{\beta}\in\Gamma_{\beta}$.

This jointly with $(\ref{last lemma})$ yields that
\begin{eqnarray*}
  0\le c_{*}(\beta) &\le& \max_{t\ge 0}\Psi(\gamma_{\beta}(t))=\Psi(t_{\beta}\phi,t_{\beta}\phi) \\
   &\le& \frac{a_1+a_2}{3}\|\phi\|^2
    t_{\beta}^2+\frac{b_1+b_2}{12}\|\phi\|^4 t^4_{\beta}.
\end{eqnarray*}
Therefore $c_{*}(\beta)\to 0$ as $\beta\to \infty$.
\end{proof}
By the Mountain Pass Lemma without $(PS)$ condition (see
\cite{Ambrosetti-Rabinowitz73} or \cite{Willem}), there exists a
 sequence
$\{\mathbf{u}_n:=(u_{n,1},u_{n,2})\}\subset\mathcal{H}$ such that
    $\Psi(\mathbf{u}_n)\to c_{*}$ and $\Psi^{'}(\mathbf{u}_n)\to 0$.
We assume that $(u_{n,1},u_{n,2})$ is nonnegative, otherwise we
consider $(|u_{n,1}|,|u_{n,2}|)$.

 For $n$ large enough, we have that
 \begin{eqnarray*}
   c_{*}(\beta)+\|u_{n,1}\|+\|u_{n,2}\|&\ge& \Psi(u_{n,1},u_{n,2})-\frac{1}{q+1}\langle\Psi^{'}(u_{n,1},u_{n,2}),(u_{n,1},u_{n,2})\rangle\\
   &=& \frac{q-1}{2(q+1)}(a_1\|u_{n,1}\|^2+a_2\|u_{n,2}\|^2)+\frac{q-3}{4(q+1)}(b_1\|u_{n,1}\|^4+b_2\|u_{n,2}\|^4)\\
   && -\frac{5-q}{6(q+1)}(\lambda_1|u_{n,1}^+|^6+\lambda_2|u_{n,2}^+|^6)\\
    &\ge& C(\|u_{n,1}\|^2+\|u_{n,2}\|^2).
 \end{eqnarray*}
    It follows that $\|u_{n,1}\|$ and $\|u_{n,2}\|$ are bounded.
Going if necessary to a subsequence, we can assume that
$u_{n,1}\rightharpoonup \bar{u}_1$ and $u_{n,2}\rightharpoonup
\bar{u}_2$.

\begin{lem}\label{4.3} For $\beta>0$  large enough, $u_{n,1}\to\bar{u}_1$ and $u_{n,2}\to\bar{u}_2$ in $H$ as $n\to
\infty$; Moreover, $(\bar{u}_1,\bar{u}_2)\not\equiv(0,0)$.
\end{lem}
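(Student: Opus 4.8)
The plan is a concentration–compactness argument carried out below an energy threshold that does not depend on $\beta$, using crucially that $c_*(\beta)\to0$ as $\beta\to\infty$ (Lemma \ref{concentration lemma}). Recall that $\{\mathbf u_n\}$ is bounded in $\mathcal H$, may be taken nonnegative, and $u_{n,i}\rightharpoonup\bar u_i$ in $H$ with $u_{n,i}\to\bar u_i$ in $L^r(\Omega)$ for every $r\in[1,6)$ and a.e.; in particular $\bar u_i\ge0$. First I would pass to a further subsequence so that $\|u_{n,i}\|^2\to t_i\ge\|\bar u_i\|^2$ for $i=1,2$. Passing to the limit in $\langle\Psi'(\mathbf u_n),(\varphi,0)\rangle\to0$ and in $\langle\Psi'(\mathbf u_n),(0,\varphi)\rangle\to0$ shows that each $\bar u_i$ solves the limit equation $-(a_i+b_it_i)\Delta\bar u_i=\lambda_i\bar u_i^{5}+(\text{coupling term})$; testing this with $\bar u_i$ gives $(a_i+b_it_i)\|\bar u_i\|^2=\lambda_i|\bar u_i|_6^6+\beta\!\int_\Omega\bar u_1^{\frac{q+1}{2}}\bar u_2^{\frac{q+1}{2}}\,\mathrm dx$.

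Next I would insert the Brezis–Lieb splitting with $v_{n,i}:=u_{n,i}-\bar u_i\rightharpoonup0$. Then $\|u_{n,i}\|^2=\|\bar u_i\|^2+\|v_{n,i}\|^2+o(1)$, so $B_i:=\lim\|v_{n,i}\|^2=t_i-\|\bar u_i\|^2\ge0$; by the Brezis–Lieb lemma $|u_{n,i}|_6^6=|\bar u_i|_6^6+|v_{n,i}|_6^6+o(1)$, so $\ell_i:=\lim|v_{n,i}|_6^6\ge0$; and since $q+1<6$ the coupling term converges fully, $\int_\Omega u_{n,1}^{\frac{q+1}{2}}u_{n,2}^{\frac{q+1}{2}}\,\mathrm dx\to\int_\Omega\bar u_1^{\frac{q+1}{2}}\bar u_2^{\frac{q+1}{2}}\,\mathrm dx$, by the compact embedding $H\hookrightarrow L^{q+1}(\Omega)$ and Hölder's inequality. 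Taking the limit in $\langle\Psi'(\mathbf u_n),(u_{n,1},0)\rangle\to0$ (resp. $(0,u_{n,2})$) and subtracting the limit equation tested with $\bar u_i$ yields the clean identity $(a_i+b_it_i)B_i=\lambda_i\ell_i$. The Sobolev inequality gives $|v_{n,i}|_6^2\le S^{-1}\|v_{n,i}\|^2$, hence $\ell_i\le S^{-3}B_i^{3}$; therefore, \emph{whenever $B_i>0$}, dividing by $B_i$ gives $a_i+b_it_i\le\lambda_iS^{-3}B_i^{2}$ and so $B_i\ge(a_i/\lambda_i)^{1/2}S^{3/2}=:\tau_i>0$, a positive lower bound independent of $\beta$.

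Finally I would extract a $\beta$-independent lower bound on $c_*(\beta)$ in the noncompact case by passing to the limit in the combination that simultaneously removes the critical and the coupling contributions, namely $\Psi(\mathbf u_n)-\frac1{q+1}\langle\Psi'(\mathbf u_n),\mathbf u_n\rangle\to c_*(\beta)$, which gives
\[
c_*(\beta)=\tfrac{q-1}{2(q+1)}\big(a_1t_1+a_2t_2\big)+\tfrac{q-3}{4(q+1)}\big(b_1t_1^{2}+b_2t_2^{2}\big)+\tfrac{5-q}{6(q+1)}\big(\lambda_1(|\bar u_1|_6^6+\ell_1)+\lambda_2(|\bar u_2|_6^6+\ell_2)\big),
\]
in which every term is nonnegative because $3<q<5$. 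Hence $c_*(\beta)\ge\frac{q-1}{2(q+1)}a_it_i\ge\frac{q-1}{2(q+1)}a_iB_i$, so if $(B_1,B_2)\neq(0,0)$, say $B_j>0$, then $c_*(\beta)\ge\frac{q-1}{2(q+1)}a_j\tau_j=:\kappa_j>0$ with $\kappa_j$ independent of $\beta$; thus $c_*(\beta)\ge\min\{\kappa_1,\kappa_2\}>0$. By Lemma \ref{concentration lemma} this is impossible once $\beta$ is large enough, so $B_1=B_2=0$, i.e.\ $\|u_{n,i}\|\to\|\bar u_i\|$, which together with the weak convergence forces $u_{n,i}\to\bar u_i$ in $H$; the standard subsequence argument then upgrades this to the whole sequence. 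Strong convergence gives $\Psi(\bar{\mathbf u})=\lim\Psi(\mathbf u_n)=c_*(\beta)\ge\alpha>0=\Psi(\mathbf 0)$ by Lemma \ref{Mountainpass Geometry}, so $\bar{\mathbf u}\neq\mathbf 0$. The main obstacle is the nonlocal Kirchhoff term: it prevents $\bar{\mathbf u}$ from being a critical point of $\Psi$ before compactness is proved, so one must work with the limit equation carrying the constant $t_i$; the other delicate point is selecting, among the Nehari-type linear combinations, precisely the one whose coefficients are all positive and which annihilates the coupling integral, since that is what makes the threshold bound uniform in $\beta$.
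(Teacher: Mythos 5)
Your argument is correct, but it takes a genuinely different route from the paper. The paper invokes Lions' second concentration--compactness lemma: it writes $|\nabla u_{n,1}|^2\rightharpoonup|\nabla\bar u_1|^2+\mu$, $|u_{n,1}|^6\rightharpoonup|\bar u_1|^6+\nu$ with $\nu=\sum_i\nu_i\delta_{\mathbf{x}_i}$, tests $\Psi'(\mathbf u_n)$ against localized cut-offs $\varphi_\rho u_{n,1}$ around each possible concentration point, derives $\lambda_1\nu_i\ge a_1\mu_i$ and hence $\nu_i\ge(a_1S/\lambda_1)^{3/2}$ for each atom, and then feeds this into $\Psi(\mathbf u_n)-\tfrac{1}{q+1}\langle\Psi'(\mathbf u_n),\mathbf u_n\rangle$ to get a $\beta$-independent lower bound $c_*(\beta)\ge\tfrac{5-q}{6(q+1)}\lambda_1^{-1/2}(a_1S)^{3/2}$ contradicting Lemma \ref{concentration lemma}; this yields $L^6$-convergence first, and $H$-convergence only afterwards by the scheme of Lemma \ref{2.1}(v). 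You instead perform a global Brezis--Lieb splitting $u_{n,i}=\bar u_i+v_{n,i}$, pass to the limit equation with the frozen Kirchhoff constant $a_i+b_it_i$, obtain the clean identity $(a_i+b_it_i)B_i=\lambda_i\ell_i$, and quantize $B_i$ directly via the Sobolev inequality, arriving at a threshold $\tfrac{q-1}{2(q+1)}\lambda_j^{-1/2}a_j^{3/2}S^{3/2}$ of the same nature. Both proofs rest on the same two pillars --- compactness of the subcritical coupling term ($q+1<6$) and the fact that the combination $\Psi-\tfrac{1}{q+1}\langle\Psi',\cdot\rangle$ has all nonnegative coefficients while annihilating the coupling integral --- but your version avoids the measure-theoretic machinery and the delicate $\rho\to0$ estimates, and it delivers strong $H$-convergence in one step rather than via $L^6$-convergence; the paper's local approach would be the more robust one if one needed to rule out concentration in settings where a global splitting identity is unavailable. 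Two cosmetic remarks: the conclusion holds for the already-extracted subsequence (which is all the lemma asserts, since only existence of $\bar{\mathbf u}$ is used later), and the nonnegativity assumption on $\mathbf u_n$ is what lets you drop the superscripts $+$ throughout, exactly as in the paper.
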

\begin{proof}
The main idea of this proof is motivated by Alves-Correa-Figueiredo
\cite{Alves-Correa-Fig2010} in the case of a single equation.
Suppose that $|\nabla u_{n,1}|^2\rightharpoonup|\nabla
\bar{u}_1|^2+\mu$
 and $|u_{n,1}^6|\rightharpoonup |\bar{u}_1|^6+\nu$. The concentration
 compactness principle due to Lions
 yields that there exist an at most countable index set $\Lambda$,
 sequences $\{\mathbf{x}_i\}\subset\mathbb{R}^3$,
 $\{\mu_i\}$,$\{\nu_i\}\subset[0,\infty)$,
 such that for any $i\in\Lambda$,
 \begin{equation}\label{concentration1}
 \nu=\sum_{i\in\Lambda}\nu_i\delta_{\mathbf{x}_i},\quad
 \mu\ge\sum_{i\in\Lambda}\mu_i\delta_{\mathbf{x}_i}\qquad
 S\nu_i^{\frac{1}{3}}\le\mu_i.
 \end{equation}
Define a cut-off function by
$\varphi_{\rho}(x):=\varphi(\frac{\mathbf{x}-\mathbf{x}_i}{\rho})$
for every $\rho>0$ where
$\varphi\in\mathcal{C}^{\infty}_0(\Omega,[0,1])$ satisfying
$\varphi\equiv 1$ on $B_1(0)$, $\varphi\equiv 0$ on $\Omega\setminus
B_2(0)$
 and $|\nabla\varphi|_{\infty}\le 2$.

  Hence, $$\langle\Psi^{'}(u_{n,1},u_{n,2}),(\varphi_{\rho}u_{n,1},0)\rangle\to
 0,$$ that is,
\begin{eqnarray}
\nonumber
   &&(a_1+b_1\|u_{n,1}\|^2)\int_{\Omega}u_{n,1}\nabla u_{n,1}\nabla
   \varphi_{\rho}\, \mathrm{d}x
   +(a_1+b_1\|u_{n,1}\|^2)\int_{\Omega}\varphi_{\rho}|\nabla u_{n,1}|^2\,\mathrm{d} x \\
   &=&\lambda_1\int_{\Omega}\varphi_{\rho}|u_{n,1}|^6\,\mathrm{d}x
   +\beta\int_{\Omega}\varphi_{\rho}|u_{n,1}|^{\frac{q+1}{2}}|u_{n,2}|^{\frac{q+1}{2}}\,\mathrm{d}x
   +o(1)\label{concentration2}
\end{eqnarray}

By  direct calculations  and H{\"o}lder inequality, we have that
\begin{eqnarray*}
    && \lim_{\rho\to 0}\big[\lim_{n\to\infty}(a_1+b_1\|u_{n,1}\|^2)\int_{\Omega}u_{n,1}\nabla u_{n,1}\nabla
   \varphi_{\rho}\, \mathrm{d}x\big] \\
   &\le& \lim_{\rho\to 0}\big[\lim_{n\to\infty}(a_1+b_1\|u_{n,1}\|^2)|u_{n,1}|_3\|u_{n,1}\|(\int_{\Omega}|\nabla
   \varphi_{\rho}|^6\, \mathrm{d}x)^{\frac{1}{6}}\big] \\
   &\le& C\lim_{\rho\to 0}\rho^{2}=0,\\
  && \lim_{\rho\to 0}\big[\lim_{n\to\infty}(a_1+b_1\|u_{n,1}\|^2)\int_{\Omega}\varphi_{\rho}|\nabla u_{n,1}|^2\, \mathrm{d}x\big] \\
  &\ge& \lim_{\rho\to 0}a_1\int_{\Omega}\varphi_{\rho}(|\nabla \bar{u}_1|^2+\mu)\, \mathrm{d}x \\
  &=&\lim_{\rho\to 0}a_1\int_{\Omega}\varphi_{\rho}\mu\,
  \mathrm{d}x\\
  &\ge& a_1\mu_i,\\
   && \lim_{\rho\to 0}\lim_{n\to\infty}\lambda_1\int_{\Omega}\varphi_{\rho}|u_{n,1}|^6\,\mathrm{d}x\\
  &=& \lim_{\rho\to 0}\lambda_1\int_{\Omega}\varphi_{\rho}(|\bar{u}_1|^6+\nu)\,\mathrm{d}x\\
  &=&\lambda_1\nu_i,\\
   &&\lim_{\rho\to 0}\lim_{n\to\infty}\beta\int_{\Omega}\varphi_{\rho}|u_{n,1}|^{\frac{q+1}{2}}|u_{n,2}|^{\frac{q+1}{2}}\,\mathrm{d}x  \\
  &\le&\lim_{\rho\to
  0}\lim_{n\to\infty}\beta\left(\int_{\Omega}\varphi_{\rho}^{\frac{12}{5-q}}\,\mathrm{d}x\right)^{\frac{5-q}{12}}\left(\int_{\Omega}|u_{n,1}|^6\,\mathrm{d}x\right)^{\frac{q+1}{12}}\left(\int_{\Omega}|u_{n,2}|^{q+1}\,\mathrm{d}x\right)^{\frac{1}{2}}\\
  &\le& C\lim_{\rho\to 0}\rho^{3}=0.
\end{eqnarray*}
Therefore, letting $n\to\infty$ and $\rho\to 0$ in
(\ref{concentration2}), we get that $\lambda_1\nu_i\ge a_1\mu_i$.
This jointly (\ref{concentration1}), yields that
$\nu_i\ge(a_1S/\lambda_1)^{\frac{3}{2}}$.

Since \begin{eqnarray*}
        c_{*}(\beta) &=& \Psi(u_{n,1},u_{n,2})-\frac{1}{q+1}\langle\Psi^{'}(u_{n,1},u_{n,2}),(u_{n,1},u_{n,2})\rangle +o(1)\\
        &\ge & \frac{5-q}{6(q+1)}\int_{\Omega}\lambda_1|u_{n,1}|^6\,\mathrm{d}x+o(1)\\
        &\ge &
        \frac{5-q}{6(q+1)}\int_{\Omega}\lambda_1\varphi_{\rho}|u_{n,1}|^6\,\mathrm{d}x+o(1),
      \end{eqnarray*}
letting $n\to\infty$, we obtain that
\begin{eqnarray*}
  c_{*}(\beta) &\ge& \frac{5-q}{6(q+1)}\lambda_1\sum_{i\in\Lambda}\varphi_{\rho}(\mathbf{x}_i)\nu_i  \\
     &=& \frac{5-q}{6(q+1)}\lambda_1\sum_{i\in\Lambda}\nu_i \\
     &\ge& \frac{5-q}{6(q+1)}\lambda_1^{-\frac{1}{2}}(a_1S)^{\frac{3}{2}}>0.
\end{eqnarray*}
This contradicts with Lemma \ref{concentration lemma} when $\beta$
is large enough. Hence, $\Lambda$ is empty and it follows that
$u_{n,1}\to \bar{u}_1$ in $L^6(\Omega)$. In the same way, we get
that $u_{n,2}\to \bar{u}_2$ in $L^6(\Omega)$. By similar arguments
as in Lemma \ref{2.1}$(v)$, we can prove that $u_{n,1}\to\bar{u}_1$
and $u_{n,2}\to \bar{u}_2$ in $H$.

Moreover, we have that for fixed $\beta>0$,
$$\Psi(\bar{u}_1,\bar{u}_2)=c_{*}(\beta)\ge\alpha>0,$$
with $\alpha$ obtained in \emph{Lemma \ref{Mountainpass Geometry} (i)}.
Hence, $(\bar{u}_1,\bar{u}_2)\not\equiv(0,0)$.
\end{proof}

{\textbf{\emph Proof of Theorem 1.3}}

Lemma \ref{4.3} immediately yields that $(\bar{u}_1,\bar{u}_2)$ is a
nonnegative solution of problem (\ref{critical system}) as $\beta$
is large enough.

In order to prove Theorem \ref{thm3}, it is enough for us to prove
$\bar{u}_1>0$ and $\bar{u}_2>0$.

In fact, if $\bar{u}_2\equiv 0$, then
$\bar{u}_1\ge 0$ and $\bar{u}_1\not\equiv 0$ which is   a  solution
of
\begin{equation}\label{the fct of J}
    \begin{cases}
    -(a_1+b_1\|u\|^2)\Delta u =\lambda_1 (u^{+})^5\qquad in \quad\Omega,\\
         u=0\quad on \quad\partial{\Omega}.
\end{cases}
\end{equation}
Then, maximum principle implies that  $\bar{u}_1>0$. Moreover,
$\bar{u}_1$ is a critical point of
$$J(u)=\frac{1}{2}a_1\|u\|^2+\frac{1}{4}\|u\|^4+\lambda_1|u^{+}|_6^6, \qquad \forall u\in H.$$

We can define the Nehari manifolds corresponding to $\Psi$ and $J$
respectively by
\begin{equation*}
    \mathcal{N}=\left\{\;\mathbf{u}\in\mathcal{H}\setminus\{\mathbf{0}\}\left|
\begin{array}{cc}&a_1\|u_1\|^2+a_2\|u_2\|^2+
    b_1\|u_1\|^4+b_2\|u_2\|^4\\
  =& \int_{\Omega}(\lambda_1|u_1^{+}|^{6}+\lambda_2|u_2^{+}|^{6}
    +2\beta|u_1^{+}|^{\frac{q+1}{2}}|u_2^{+}|^{\frac{q+1}{2}})\,\mathrm{d}x
    \end{array}
    \right.
    \right\},
\end{equation*}
 and
 \begin{equation*}
    \tilde{\mathcal{N}}=\{u\in H\setminus\{0\}|
    a_1\|u\|^2+b_1\|u\|^4=\lambda_1|u^{+}|_6^6\}.
 \end{equation*}

We claim that $\mathcal{N}$ is homeomorphic to the unit sphere
of $\mathcal{H}$. Indeed, for fixed $\mathbf{u}\in\mathcal{H}\setminus\{\mathbf{0}\}$, $t\mathbf{u}\in\mathcal{N}$  if and only if $t$ is a positive zero of the following function on $\mathbb{R}$
\begin{equation*}
    g(t):= At^4+Bt^{q-1}-Ct^2-D,
\end{equation*}
where $A,B,C$ and $D$ are positive constants (depending on
$\mathbf{u}$). In order to examine the zeros of $g$, we firstly
consider its derivative $g^{'}(t)=t[4At^2+(q-1)Bt^{q-3}-2C]$, and
define $h(t)=4At^2+(q-1)Bt^{q-3}-2C$. Observing that
$h(0)<0,\,h(+\infty)=+\infty$ and $h^{'}(t)>0$ $\forall$
$t\in\mathbb{R}^{+}$, we can conclude that  there exists a unique
$\bar{t}>0$ such that $h^{'}(\bar{t})=0$. In other words, there
exists a unique $\bar{t}>0$ such that $g^{'}(\bar{t})=0$. Notice
that $g(0)<0$, $g^{'}(0)=0$, $g^{''}(0)<0$ and $g(+\infty)=+\infty$.
Thus, $g(t)$ has a unique positive zero, and hence, for any
$\mathbf{u}\in\mathcal{H}\setminus\{\mathbf{0}\}$, there exists a
unique $t(\mathbf{u})\in \mathbb{R}^{+}$ such that
$t(\mathbf{u})\mathbf{u}\in\mathcal{N}$. By  similar arguments as in
the proof of $Lemma$ \ref{2.1} $(i)$, we have that $\mathcal{N}$ is
homeomorphic to the unit sphere of $\mathcal{H}$.

It follows from similar arguments as in $Theorem$ $4.2$ \cite{Willem} that
$$c_{*}(\beta)=\inf_{\mathbf{u}\in\mathcal{N}}\Psi(\mathbf{u}).$$
Furthermore,
 \begin{equation*}
    J(\bar{u}_1)=\Psi(\bar{u}_1,0)=\inf_{\mathbf{u}\in
    \mathcal{N}}\Psi(\mathbf{u})=\inf_{\mathbf{u}\in\tilde{\mathcal{N}}\times\{0\}}\Psi(\mathbf{u})=\inf_{u\in\tilde{\mathcal{N}}}
    J(u).
 \end{equation*}
Thus, $\bar{u}_1$ is a least energy solution of problem (\ref{the
fct of J}).

 It follows that
\begin{equation*}
    \inf_{u\in H\setminus\{0\} \atop |u|_6=1}a_1\|u\|^2+b_1\|u\|^4
\end{equation*}
can be attained on $\Omega\not=\mathbb{R}^3$, which contradicts with
the fact that

{\emph{S is never attained on domains
$\Omega\subset\mathbb{R}^3$,$\Omega\not=\mathbb{R}^3$}}(Theorem
$III.1.2,$\cite{Struwe}).

Hence, $\bar{u}_2\not\equiv 0$. Similar arguments implies that
$\bar{u}_1\not\equiv 0$. The strong maximum principle can be applied
to each equation of problem (\ref{critical system}) and implies that
$\bar{u}_1>0$ and $\bar{u}_2>0$. This completes the proof of Theorem
\ref{thm3}. \hspace{\stretch{1}}$\Box$

  {\textbf{\emph Proof of Corollary \ref{cor 1.4}}}

Let us recall that for all $\mathbf{u}\in\mathcal{H}$
\begin{eqnarray*}
  {\Phi}^{+}(\mathbf{u}) &=& \frac{1}{2}(a_1\|u_1\|^2+a_2\|u_2\|^2)+
    \frac{1}{4}(b_1\|u_1\|^4+b_2\|u_2\|^4) \\
  &&-\frac{1}{p+1}\int_{\Omega}\left(\lambda_1|u_1^+|^{p+1}+\lambda_2|u_2^+|^{p+1}
    +2\beta|u_1^+|^{\frac{p+1}{2}}|u_2^+|^{\frac{p+1}{2}}\right)\,\mathrm{d}x.
\end{eqnarray*}
We define
\begin{equation*}
    c^{+}_{*}(\beta)\stackrel{\mathrm{def}}{=}\inf_{\gamma\in\Gamma^{+}_{\beta}}\sup_{t\in[0,1]}\Phi^{+}(\gamma(t)),
 \end{equation*}
 where
 $$\Gamma^{+}_{\beta}:=\{\gamma\in\mathcal{C}([0,1],\mathcal{H})|\quad \gamma(0)=0,\quad
             \Phi^{+}(\gamma(1))<0\}.$$
             It follows from condition (\ref{subcritical case}) that
             $\Phi^{+}$ satisfies the $(PS)$ condition.

 Using similar arguments as in \emph{Lemma \ref{Mountainpass Geometry}}
 and \emph{Lemma \ref{concentration lemma}}, we have:
\begin{enumerate}
  \item  there exist positive numbers $\delta$ and $\alpha$ (depending on
$\beta$) such that for any $\mathbf{u}\in\mathcal{H}$ with
$\|\mathbf{u}\|=\delta$, $\Phi^{+}(\mathbf{u})\ge\alpha>0$.
  \item  for  $\beta>0$, there exists
$\mathbf{e}=\mathbf{e}(\beta)\in\mathcal{H}$ with
$\Phi^{+}(\mathbf{e})<0$ and $\|\mathbf{e}\|\ge\delta$.
  \item  under the assumptions of Theorem \ref{thm1},
$c^{+}_{*}(\beta)\to 0$ as $\beta\to \infty$.
\end{enumerate}

By the Mountain Pass Lemma with $(PS)$ condition (see
\cite{Ambrosetti-Rabinowitz73} or \cite{Willem}), we obtain a
critical point $\tilde{\mathbf{u}}=(\tilde{u}_1,\tilde{u}_2)$ of
$\Phi^{+}$ satisfying that
$$\Phi^{+}(\tilde{\mathbf{u}})=c^{+}_{*}(\beta),\qquad  \tilde{\mathbf{u}}\ge 0\qquad and \quad \tilde{\mathbf{u}}\not\equiv \mathbf{0}.$$
If $\tilde{u}_2\equiv 0$, then $\tilde{u}_1$ is a positive solution
of problem (\ref{2}) with $a=a_1$, $b=b_1$ and
$f(x,u)=\lambda_1(u^{+})^{p}$. The condition that $N=1$ or $\Omega$
is a radially symmetric ensures that such a problem has a uniqueness
positive solution (see \emph{Remark \ref{rem 1.1}}). Hence
$\tilde{u}_1=U_1$. It follows that
\begin{equation*}
    \frac{p-1}{2(p+1)}a_1\|U_1\|^2+\frac{p-3}{4(p+1)}b_1\|U_1\|^4=\Phi^{+}(U_1,0)=c^{+}_{*}(\beta)\to \,0,\qquad as\quad\beta\to \, +\infty,
\end{equation*}
which is a contradiction. Thus, we have proved that $\tilde{u}_2\not\equiv 0$.
Similar arguments implies  $\tilde{u}_1\not\equiv 0$. The strong
maximum principle can be applied to each equation of problem
(\ref{1}) and yields that $\tilde{u}_1>0$ and $\tilde{u}_2>0$.

Now, we have obtained a positive least energy solution
$\tilde{\mathbf{u}}$ of problem $(\ref{1})$ when $\beta>0$ is
sufficiently large. Moreover,
\begin{equation*}
    \Phi^{+}(\mathbf{u}^{*})\ge
    \,max\{\Phi^{+}(\mathbf{u}_1),\Phi^{+}(\mathbf{u}_2)\} >\Phi^{+}(\tilde{\mathbf{u}})=c^{+}_{*}(\beta)\to
    0, as \,\beta\to+\infty,
\end{equation*}
where $\mathbf{u}^{*}$ is the solution obtained in Theorem
$\ref{thm1}$. Thus, we get two positive solutions of problem
(\ref{1}) for sufficiently large $\beta>0$.
 \hspace{\stretch{1}}$\Box$

\bibliographystyle{plain}
  \makeatletter
   \def\@biblabel#1{#1.~}
\makeatother


\providecommand{\bysame}{\leavevmode\hbox to3em{\hrulefill}\thinspace}
\providecommand{\MR}{\relax\ifhmode\unskip\space\fi MR }
\providecommand{\MRhref}[2]{%
  \href{http://www.ams.org/mathscinet-getitem?mr=#1}{#2}
}
\providecommand{\href}[2]{#2}

\end{document}